\documentclass[11pt]{article}
\usepackage[a4paper,top=2.2cm, bottom=3cm]{geometry}
\usepackage{amsmath}
\usepackage{amsfonts}
\usepackage{amsthm}
\usepackage{graphicx}
\usepackage[utf8]{inputenc}
\usepackage{hyperref}
\newtheorem{lemma}{Lemma}
\newtheorem{theorem}{Theorem}
\newtheorem{proposition}{Proposition}
\newcommand{\Paths}{\mathcal{P}}
\newcommand\numpath[2]{Z^{#1}_{#2}}
\newcommand\oeis[1]{\cite[\href{https://oeis.org/#1}{#1}]{oeis}}

\newcommand\s{s}

\newcommand\muco{\multicolumn2{c}}
\def\muco#1{{\setbox0=\hbox{#1}\dimen0=0,27\wd0
    \null\hskip-\dimen0\,\unhbox0\,
    \hskip-\dimen0\null
  }}

\title{The Number of Convex Polyominoes with Given Height and Width}
\author{Kevin Buchin, Man-Kwun Chiu, Stefan Felsner,
Günter Rote, André Schulz}


\pagestyle{myheadings}
\markright
{
Buchin, Chiu, Felsner, Rote, Schulz:
The Number of Convex Polyominoes
}

\begin{document}
\maketitle
\begin{abstract}
  We give a new combinatorial proof for the number of convex polyominoes whose
  minimum enclosing rectangle has given dimensions.
We also count the subclass of these polyominoes that contain the lower
left corner of the enclosing rectangle (\emph{directed} polyominoes).
We indicate how to sample random polyominoes in these classes.
As a side result, we calculate the first and second moments of the
number of common points of two monotone lattice paths between two
given points.
\end{abstract}

{\makeatletter{\renewcommand*{\@makefnmark}{}\makeatother
\footnotetext{This research was done 
at the
15th European Research Week on Geometric Graphs
(GGWeek 2018),
September 10--14, 2018,
in Haus Tornow am See (M\"arkische Schweiz, Germany).
The workshop was supported by the Deutsche Forschungsgemeinschaft (DFG)
through the Research Training Network
\emph{Facets of Complexity}
and
the collaborative DACH project
\emph{Arrangements and Drawings}.}

}

\tableofcontents








\section{Convex Polyominoes: Overview of Results}
\label{sec-convex}

A \emph{polyomino} is a connected set of lattice squares, where two
squares are considered as adjacent if they share an edge.
A polyomino is \emph{convex} if the intersection with every horizontal
or vertical line is a connected interval,
see Figure~\ref{fig:example} for an example.
The \emph{enclosing rectangle} $R$ is the smallest axis-parallel
rectangle containing the polyomino.
\begin{figure}
  \centering
  \includegraphics{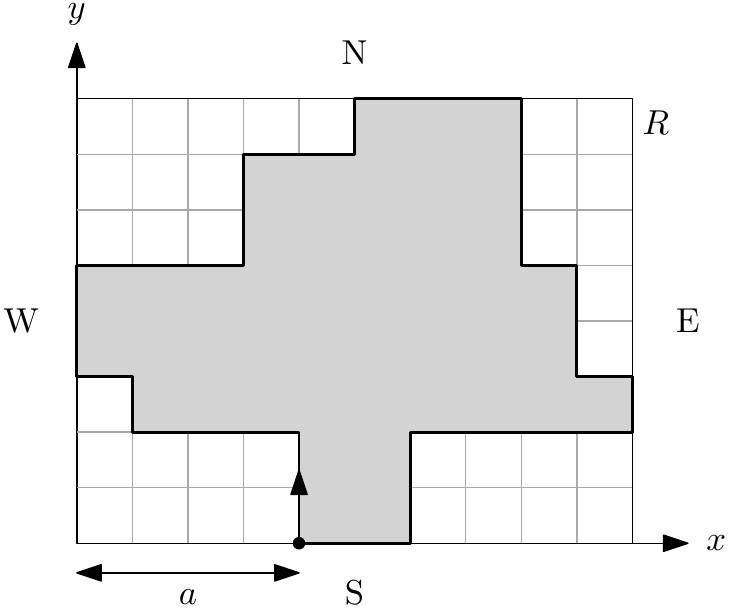}
  \caption{A convex polyomino of width $w=10$ and height $h=8$ in its
    enclosing rectangle~$R$.
    We traverse the boundary clockwise, starting at the leftmost
    point $(a,0)$ on the bottom side.
  The distance $a$ of this point from the left edge will be used as a parameter.}
  \label{fig:example}
\end{figure}
The \emph{height} $h$ and \emph{width} $w$ of a polyomino are defined
as the corresponding dimensions of $R$.
The perimeter (length of the boundary) of a convex polyomino is
$2(h+w)$.
Throughout, 
we will denote the semi-perimeter by $\s := h+w$.

\begin{theorem}\label{th:all-convex}
  The number of convex polyominoes of width $w\ge1$ and height $h\ge1$
is
\begin{equation}
  \label{P}
    P_{wh}=\binom{2\s-4}{2w-2} + \frac{2\s-5}2\binom{2\s-6}{2w-3}
      -2      (\s-3)\binom{\s-2}{w-1}\binom{\s-4}{w-2},
\end{equation}
where $\s=w+h$,
see Table~\ref{tab:Pwh}.
\end{theorem}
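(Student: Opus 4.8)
The plan is to encode the boundary of a convex polyomino as a collection of monotone lattice paths, reduce the count to a weighted enumeration of pairs of such paths, and recognize the weights as low-order moments of the number of common points of two monotone paths (the ``side result'' promised in the abstract). First I would fix the enclosing rectangle as $[0,w]\times[0,h]$ and record that a convex polyomino is determined by its lower profile $L(x)$ and upper profile $U(x)$, where $L$ is anti-unimodal, $U$ is unimodal, $0\le L\le U\le h$, and all four sides of $R$ are touched (this is what forces the bounding box to be \emph{minimal} rather than merely containing). Using the clockwise traversal from the leftmost bottom point $(a,0)$, the closed boundary splits into four monotone staircase arcs meeting along the four contact segments; the parameter $a$ is used to break the left--right symmetry and to locate where the two profiles ``turn.''

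Next I would \emph{unfold} the closed boundary into a single monotone path: reflecting the leftward/downward portions of the traversal turns the perimeter, which uses $w$ rightward, $w$ leftward, $h$ upward and $h$ downward steps, into one monotone staircase in a doubled grid. The requirement that all four sides are touched fixes four of the steps, which is exactly the passage from a grid of dimensions $2w\times 2h$ to one of dimensions $(2w-2)\times(2h-2)$; the free count of such unfoldings is the number of monotone paths with $2w-2$ horizontal and $2h-2$ vertical steps, namely the leading term $\binom{2\s-4}{2w-2}$ of \eqref{P}. Of course not every unfolding corresponds to a valid polyomino: the two profiles may fail the domination $L\le U$, i.e.\ the two arcs may cross or coincide rather than bound a region. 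The correct enumeration therefore requires inclusion--exclusion over the set of points where the two profiles meet.

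This is where the moments enter. I would express $P_{wh}$ as the free count corrected by sums that count configurations weighted by (functions of) the number of common points of two monotone lattice paths between two fixed endpoints. The expected number of such common points is linear in $\s$ and is evaluated by a Vandermonde/hockey-stick convolution $\sum_i\binom{\cdot}{\cdot}\binom{\cdot}{\cdot}$; its contribution collapses to the single-path term $\frac{2\s-5}{2}\binom{2\s-6}{2w-3}$, whose half-integer coefficient reflects the averaging over the distinguished contact point. The configurations that genuinely violate convexity factor, via a reflection/Lindström--Gessel--Viennot argument, into two \emph{independent} monotone paths in the ordinary (undoubled) grid, one of dimensions $(w-1)\times(h-1)$ and one of dimensions $(w-2)\times(h-2)$; together with the linear prefactor coming from the second moment this yields precisely the subtracted product term $2(\s-3)\binom{\s-2}{w-1}\binom{\s-4}{w-2}$. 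Summing the three contributions gives \eqref{P}.

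The main obstacle I expect is not the evaluation of the sums — once set up, these are routine Vandermonde manipulations — but the exact bijective bookkeeping: establishing the precise correspondence between convex polyominoes with a \emph{minimal} $w\times h$ box and the weighted path pairs, and verifying that the inclusion--exclusion over common points cancels the overcounting of crossing configurations exactly. The delicate, error-prone part is the treatment of degeneracies, namely when a contact segment shrinks to a single point, when one of the four arcs has length zero, or when the two profiles share a vertex; one must check that each polyomino is counted with total multiplicity one. The content of the theorem is essentially the statement that all of these corrections, rather than proliferating, collapse into exactly the two extra terms of \eqref{P}.
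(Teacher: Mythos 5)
Your outline pattern-matches the three terms of \eqref{P} to ``free count $+$ first-moment correction $-$ second-moment correction,'' but this decomposition does not correspond to a workable proof, and the middle step in particular is misattributed. In the paper's argument the two positive terms together are the exact count of one concrete set of objects (length-$2\s$ closed walks spanning the rectangle, allowed to self-intersect, called S-walks): $\binom{2\s-4}{2w-2}$ counts the walks whose leftmost bottom point is the SW corner ($a=0$), and $\frac{2\s-5}{2}\binom{2\s-6}{2w-3}=(w-1)\binom{2\s-5}{2w-2}$ counts those with $a\in\{1,\dots,w-1\}$; the half-integer coefficient is an artifact of rewriting $(w-1)\binom{2\s-5}{2w-2}$, not of ``averaging over a distinguished contact point.'' Your unfolding correctly produces the first term but omits the degree of freedom $a$ entirely, so your ``free count'' is already short by the whole second term, and no moment computation will supply it: nothing in your setup identifies a family of objects counted by $(w-1)\binom{2\s-5}{2w-2}$.

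The deeper problem is that the moment route is circular here. The quantities you want to feed into your inclusion--exclusion --- the first and second moments of $\lvert U\cap V\rvert$ over pairs of monotone paths --- are not ``routine Vandermonde manipulations once set up'': the authors state in Section~\ref{moments} that they tried exactly this approach and failed to evaluate the resulting sums directly. In the paper, the moment formulas \eqref{first} and \eqref{second} are \emph{consequences} of Theorems~\ref{th:all-convex} and~\ref{directed}, obtained by running the bijections backwards, so they are not available as inputs. What actually makes the negative term computable is a mechanism your proposal lacks: cut the walk at the two diagonals through the NE and SW corners, reflect the falling pieces outside the rectangle, and thereby convert each walk whose rising pieces intersect into an \emph{intersecting} pair of monotone paths whose endpoints are displaced by parameters $i,j\ge0$; Lemma~\ref{lemma-intersecting} turns the resulting double sum into products of binomials, and a reflection argument on subsets evaluates it to $(\s-3)\binom{\s-2}{w-1}\binom{\s-4}{w-2}$, which enters with the factor $2$ because rising and falling intersections are symmetric. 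Without a concrete replacement for this step, and for the accounting of the parameter $a$, the proposal does not close.
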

This formula was first proved by Gessel~\cite{Ges}, who derived it from the generating
function
\begin{equation}\label{GF}
  \sum_{h=1}^\infty
\sum_{w=1}^\infty
P_{wh}x^{2w}y^{2h} = 
 \frac
{x^2y^2\,B(x^2,y^2)}
  {\Delta(x,y)^2} - \frac{  4x^4y^4}{\Delta(x,y)^{3/2}} 
\end{equation}
with
\begin{align*}
  B(x,y) &=     1-3x-3y+3x^2+3y^2+5xy-x^3-y^3-x^2y-xy^2-xy(x-y)^2,\\
  \Delta(x,y) &=
                1- 2x^2 - 2y^2 + (x^2 - y^2)^2 
                      = (1 + x + y)(1 + x - y)(1 - x + y)(1- x- y),
\end{align*}
which is due to Lin and Chang~\cite[Eq.~\thetag{27}]{LC}.
The two terms in \eqref{GF} correspond to
the
positive and the negative terms in~\eqref{P}.
 Another proof for the formula~\eqref{P} was given by Guo and Zeng~\cite{GZ}.
They exploit a correspondence between convex polyominoes and noncrossing
 lattice paths, similar as in our proof.
 They use generating functions
 of Jacobi polynomials in order to evaluate the expressions that arise.
We will review this proof in Section~\ref{sec:GZ}.

We present a new combinatorial proof of formula~\eqref{P} in
Section~\ref{sec:general}, and we assign
some meaning
to
its
 terms. 
Based on this interpretation,
in Section~\ref{sec:sampling}, we discuss an
 acceptance-rejection method for sampling from this set of convex polyominoes,
  with an acceptance probability tending to one for large parameters
  $w$ and~$h$.

The summation of $P_{wh}$ 
over all pairs $w,h$ with fixed sum $\s$
gives
the number of convex polyominoes with perimeter $2\s$, which we
denote by $P_\s$. The resulting formula, 
\begin{align} \nonumber
  P_\s
=\sum_{w+h=\s} P_{wh}
  &=
      4^{\s-4}(2\s+3)
 -
4
(2\s-7)\binom{2\s-8}{\s-4}
\\  &= \label{Pm}
      4^{\s-4}(2\s+3)
 -
(2\s-6)\binom{2\s-6}{\s-3}
,\ \text{ for $\s\ge 4$}
\end{align}
has been known before \cite{DV,Kim}. (The reader should beware that
the literature contains
formulas 
in terms of various different quantities
instead of the parameter $\s$ that we have chosen.)
\begin{table}
  \centering
\begin{tabular}[c]{rc@{}c@{}c@{}c@{}c@{}c@{}c@{}c@{}c@{}c@{}c@{}c@{}c@{}c@{}c@{}c@{}c@{}c@{}cr}
$\s$&\multicolumn{19}{c}{$P_{wh}$}&$P_\s$\\\hline
$2 $&&&&&&&&&& \muco{1} &&&&&&&&&& 1 \\
$3 $&&&&&&&&& \muco{1}&&\muco{1} &&&&&&&&& 2 \\
$4 $&&&&&&&& \muco{1}&&\muco{5}&&\muco{1} &&&&&&&& 7 \\
$5 $&&&&&&& \muco{1}&&\muco{13}&&\muco{13}&&\muco{1} &&&&&&& 28 \\
$6 $&&&&&& \muco{1}&&\muco{25}&&\muco{68}&&\muco{25}&&\muco{1} &&&&&& 120 \\
$7 $&&&&& \muco{1}&&\muco{41}&&\muco{222}&&\muco{222}&&\muco{41}&&\muco{1} &&&&& 528 \\
$8 $&&&& \muco{1}&&\muco{61}&&\muco{555}&&\muco{1110}&&\muco{555}&&\muco{61}&&\muco{1} &&&& 2344 \\
$9 $&&& \muco{1}&&\muco{85}&&\muco{1171}&&\muco{3951}&&\muco{3951}&&\muco{1171}&&\muco{85}&&\muco{1} &&& 10416 \\
$10 $&& \muco{1}&&\muco{113}&&\muco{2198}&&\muco{11263}&&\muco{19010}&&\muco{11263}&&\muco{2198}&&\muco{113}&&\muco{1} && 46160 \\
$11 $& \muco{1}&&\muco{145}&&\muco{3788}&&\muco{27468}&&\muco{70438}&&\muco{70438}&&\muco{27468}&&\muco{3788}&&\muco{145}&&\muco{1} & 203680 \\
\end{tabular}
  \caption{The number $P_{wh}$ of convex polyominoes of width $w$ and
    height $h$ for $2\le \s=w+h\le 11$ \oeis{A093118}.
    The width $w$ increases from
$1$ to $\s-1$ in row $\s$,
while $h$ decreases and $\s=w+h$ remains constant. The row sum
    $P_\s$ is given in the rightmost column. It is the number of convex polyominoes
    of perimeter $2\s$ \oeis{A005436}.}
  \label{tab:Pwh}
\end{table}

A \emph{directed convex polyomino} is a convex polyomino that contains
the lower left corner of its enclosing rectangle $R$.

\begin{theorem}\label{directed}
  The directed convex polyominoes of width $w\ge1$ and height $h\ge1$
are counted by the squared binomial coefficients:
\begin{equation}\label{formula-directed}
D_{wh}
  =\binom{\s-2}{w-1}^2,
\end{equation}
where $\s=w+h$,
see Table~\ref{tab:directed}.
\end{theorem}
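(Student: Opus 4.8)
The plan is to set up a bijection between directed convex polyominoes with enclosing rectangle $[0,w]\times[0,h]$ and ordered pairs of monotone lattice paths, each consisting of $w-1$ horizontal and $h-1$ vertical unit steps. Since there are exactly $\binom{\s-2}{w-1}$ such paths, a bijection onto \emph{all} (unconstrained, possibly crossing) ordered pairs immediately yields $D_{wh}=\binom{\s-2}{w-1}^2$. First I would record the boundary structure forced by directedness. Describe the polyomino by its column intervals $[\ell_i,u_i]$, $i=1,\dots,w$. Because the polyomino contains the lower-left corner cell, $\ell_1=1$; combined with the valley-unimodality of the lower profile of any convex polyomino (its minimum is now pinned at the left), this forces $\ell_1\le \ell_2\le\cdots\le\ell_w$ to be non-decreasing, while the upper profile $u_i$ is mountain-unimodal with $\max_i u_i=h$, subject to $u_i\ge \ell_i$ and the overlap condition $[\ell_i,u_i]\cap[\ell_{i+1},u_{i+1}]\neq\emptyset$. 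Equivalently, traversing the boundary clockwise from $(0,0)$, the lower-left staircase degenerates to the corner and only three monotone staircases remain (NW, NE, SE), separated by the four edge contacts.

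The non-decreasing lower profile, read as a weakly increasing step function with values in $[1,h]$, already matches one factor $\binom{\s-2}{w-1}$ bijectively. The main obstacle is that the two profiles are genuinely \emph{coupled}: the constraints $u_i\ge\ell_i$ and the overlap condition, together with the shared top-right region, prevent the upper profile from ranging freely. Indeed a naive ``lower $\times$ upper'' split does not factor — checking a small box such as $w=3,h=2$ shows the number of admissible upper profiles for a fixed lower profile is $6,2,1$ rather than constant, even though the total is $9=\binom{3}{2}^2$. The device that breaks the coupling is to cut the boundary into an upper-left chain (left edge $+$ NW staircase $+$ top edge) and a lower-right chain (bottom edge $+$ SE staircase $+$ right edge), and to let the two chains \emph{share} the NE staircase, assigning part of it to each chain. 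Reflecting the NE-staircase pieces across the top edge $y=h$ and the right edge $x=w$ respectively straightens each chain into a genuine monotone path of fixed length; the position of the split along the NE staircase is precisely the degree of freedom that lets the two resulting paths cross, so that every ordered pair — crossing or not — is realized exactly once.

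Thus the key steps are: (i) the column/boundary normal form above; (ii) the reflect-and-split construction producing two monotone paths with the correct fixed endpoints in a $(w-1)\times(h-1)$ grid; and (iii) verifying that the map is a bijection in both directions, i.e.\ that an arbitrary ordered pair of such paths reconstructs a unique admissible $(\ell_i,u_i)$ satisfying convexity, the overlap condition, and the four contact requirements. I expect step (iii) — ensuring the reflected pieces recombine into a monotone NE staircase and that no spurious constraint is introduced at the shared corner — to be the hard part, since this is exactly where the earlier coupling reappears and must be shown to dissolve. As an independent check of correctness I would confirm that $\binom{\s-2}{w-1}^2$ satisfies the Pascal-type recurrence obtained by deleting the last column or top row, with base cases $D_{w1}=D_{1h}=1$; alternatively, one can specialize the convex-polyomino/noncrossing-lattice-path correspondence reviewed later in the paper to the directed case, where the noncrossing requirement is expected to collapse and leave two free paths, recovering \eqref{formula-directed}.
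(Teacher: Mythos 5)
Your overall skeleton --- cut the boundary along the NE staircase, reflect the two pieces across the N-side and the E-side, and reduce the count to ordered pairs of monotone paths with $w-1$ horizontal and $h-1$ vertical steps --- is the same as the paper's. But step (iii), which you yourself flag as the hard part, is exactly where the proof lives, and the mechanism you propose for it does not work as stated. You claim that letting the split point along the NE staircase be a free parameter straightens the two chains into monotone paths ``with the correct fixed endpoints'' so that ``every ordered pair --- crossing or not --- is realized exactly once.'' Taken literally this is inconsistent: if you cut the NE staircase at a point $(w-j,h-k)$ and reflect across $y=h$ and $x=w$, the two resulting paths end at $(w-j,h+k)$ and $(w+j,h-k)$, so the endpoints move with the cut. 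A free choice of cut therefore does not produce pairs of paths between two fixed terminals and cannot directly biject onto the $\binom{\s-2}{w-1}^2$ ordered pairs. In the paper the cut is not free at all: it is made at the \emph{unique} point where the boundary meets the diagonal through the NE corner, which fixes a parameter $i\ge 0$, and the resulting pair of reflected paths is forced to be \emph{non-intersecting}, with endpoints spread apart by an amount depending on $i$.

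What is missing is the uncrossing device that converts ``non-intersecting pairs with $i$-dependent endpoints, summed over $i$'' into ``all ordered pairs with fixed endpoints.'' The paper supplies this in two equivalent forms: first, the two-path Gessel--Viennot argument (Lemma~\ref{lemma-intersecting}) expresses the number of non-intersecting pairs for each $i$ as a difference of products of binomial coefficients, after which the sum over $i$ telescopes and only the term $\binom{\s-2}{w-1}^2$ survives; second, the explicit crossover-and-shift untangling of Section~\ref{bijection} repeatedly swaps the initial segments of the two paths at the rightmost intersection and shifts them apart, turning an arbitrary (possibly crossing) pair from $(0,0)$ to $(w-1,h-1)$ into a non-crossing pair whose final endpoint spread records the number of rounds. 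Either of these would close your gap; without one of them, the assertion that the coupling ``dissolves'' is precisely the unproved statement. Your fallback checks (the Pascal-type recurrence, or specializing the noncrossing-path correspondence) would verify the formula numerically but are not the bijection you set out to construct; note in particular that in the directed case the noncrossing requirement does not simply ``collapse'' --- it is traded for the telescoping or untangling argument above.
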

This formula was proved 
by
Barcucci, Frosini, and Rinaldi
~\cite{BFR}. 
  It 
  is implicit also in
Guo and Zeng~\cite
{GZ}, see
Section~\ref{sec:GZ}.
We will give a different proof in Section~\ref{sec:directed-width}.
The corresponding  generating function
\begin{displaymath}
    \sum_{h=1}^\infty
    \sum_{w=1}^\infty
D_{wh}x^{2w}y^{2h} = x^2y^2\big/\sqrt{\Delta(x,y)}
\end{displaymath}
has been known as well, see
Lin and Chang~\cite[Eq.~\thetag{25}]{LC}.

The formula~\eqref{formula-directed} suggests that there should be a bijection between these
polyominoes and pairs of objects that are counted by
the binomial coefficients.
Indeed, from our proof of Theorem~\ref{directed}, we can derive a
bijection
to pairs of monotone lattice paths. This is described in
Section~\ref{bijection}.
Such a bijection can be used to
generate a random sample from the
polyominoes from this class, since it is straightforward to
generate random monotone lattice paths.
The proof of Barcucci et~al.~\cite{BFR} mentioned above is also
bijective:
They used a bijection between
directed polyominoes and so-called \emph{two-colored Grand-Motzkin paths}.
It is easy to see that
{two-colored Grand-Motzkin paths} are in one-to-one correspondence
with
pairs of lattice paths, but
the bijection to polyominoes is different from ours.
We will say more 
in
Section~\ref{sec:motzkin}.

As above, we can sum the rows of the table and
conclude that  the number of directed convex polyominoes with
perimeter $2\s$
are the central binomial coefficients
\begin{equation*}
D_\s =   \binom{2\s-4}{\s-2},
\end{equation*}
  which was also known before \cite{LC,B-M},
\oeis{A000984}.
\begin{table}
  \centering
\begin{tabular}[c]{rc@{}c@{}c@{}c@{}c@{}c@{}c@{}c@{}c@{}c@{}c@{}c@{}c@{}c@{}c@{}c@{}c@{}c@{}cr}
$\s$&\multicolumn{19}{c}{$D_{wh}$}&$D_\s$\\\hline
$ 2 $&&&&&&&&&& \muco{1} &&&&&&&&&& 1 \\
$ 3 $&&&&&&&&& \muco{1}&&\muco{1} &&&&&&&&& 2 \\
$ 4 $&&&&&&&& \muco{1}&&\muco{4}&&\muco{1} &&&&&&&& 6 \\
$ 5 $&&&&&&& \muco{1}&&\muco{9}&&\muco{9}&&\muco{1} &&&&&&& 20 \\
$ 6 $&&&&&& \muco{1}&&\muco{16}&&\muco{36}&&\muco{16}&&\muco{1} &&&&&& 70 \\
$ 7 $&&&&& \muco{1}&&\muco{25}&&\muco{100}&&\muco{100}&&\muco{25}&&\muco{1} &&&&& 252 \\
$ 8 $&&&& \muco{1}&&\muco{36}&&\muco{225}&&\muco{400}&&\muco{225}&&\muco{36}&&\muco{1} &&&& 924 \\
$ 9 $&&& \muco{1}&&\muco{49}&&\muco{441}&&\muco{1225}&&\muco{1225}&&\muco{441}&&\muco{49}&&\muco{1} &&& 3432 \\
$ 10 $&& \muco{1}&&\muco{64}&&\muco{784}&&\muco{3136}&&\muco{4900}&&\muco{3136}&&\muco{784}&&\muco{64}&&\muco{1} && 12870 \\
$ 11 $& \muco{1}&&\muco{81}&&\muco{1296}&&\muco{7056}&&\muco{15876}&&\muco{15876}&&\muco{7056}&&\muco{1296}&&\muco{81}&&\muco{1} & 48620 \\
\end{tabular}
  \caption{The number $D_{wh}$ of directed convex polyominoes of width $w$ and
    height $h$. These are the squared binomial coefficients~\oeis{A008459}.}
  \label{tab:directed}
\end{table}

Our investigation of pairs of lattice paths has led, as a side result,
to formulas for the
first and second moments of the number of intersections between two
lattice paths, see Section~\ref{moments}.

For completeness, we state the well-known formulas for \emph{parallelogram
polyominoes}, which contain both the lower left and the upper right corner of~$R$.
A proof can be easily obtained by Lemma~\ref{lemma-intersecting} below
(see Section~\ref{sec:directed-width}).

\begin{proposition}\label{proposition-parallelogram}
  The number of\ parallelogram polyominoes of width $w\ge1$ and height $h\ge1$
is given by a Narayana number
\begin{equation*}
  B_{wh} = 
  {\frac{1}{\s-1}\binom{\s-1}w\binom{\s-1}h}
.\qquad\textup{\oeis{A001263}}
\end{equation*}
  The number of convex parallelogram polyominoes of perimeter $2\s$
  is equal to the $(\s-1)$th Catalan number
\begin{equation*}
  B_{\s} = \frac{1}{\s}
  { \binom{2\s-2}{\s-1}}. \qquad\textup{\oeis{A000108}}
  \eqno\qed
\end{equation*}
\end{proposition}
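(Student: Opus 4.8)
The plan is to encode each parallelogram polyomino by its two boundary staircases and reduce the count to non-crossing pairs of monotone lattice paths, exactly the objects handled by Lemma~\ref{lemma-intersecting}. A parallelogram polyomino of width $w$ and height $h$ is bounded by an upper-left path $U$ and a lower-right path $L$, each running from the lower-left corner to the upper-right corner of $R$ with $w$ horizontal and $h$ vertical unit steps. Because the polyomino contains both corners and is connected and convex, $L$ must lie strictly below $U$ throughout the interior---a shared interior lattice point would pinch the region into two. Stripping the forced corner steps, as in the treatment of Theorem~\ref{directed} in Section~\ref{sec:directed-width}, I would record such a pair as two monotone paths from $(0,0)$ to $(w-1,h-1)$, whereupon the strict interior nesting of $(L,U)$ relaxes to the condition that one path stays weakly below the other. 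Thus $B_{wh}$ is the number of weakly non-crossing ordered pairs of monotone paths from $(0,0)$ to $(w-1,h-1)$. The one delicate point, and the step I expect to require the most care, is to check that this encoding is a bijection, matching ``$(L,U)$ meet only at the two endpoints'' with ``weakly non-crossing'' and confirming that the bounding box has the prescribed dimensions $w\times h$.

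I would then count these pairs with Lemma~\ref{lemma-intersecting}; equivalently, a two-path Lindström--Gessel--Viennot argument, in which shifting the two paths one unit apart turns weakly non-crossing pairs into vertex-disjoint ones, gives the $2\times2$ determinant
\begin{equation*}
  B_{wh}=\binom{\s-2}{w-1}^2-\binom{\s-2}{w}\binom{\s-2}{w-2}.
\end{equation*}
Its leading squared term is exactly the directed count $D_{wh}=\binom{\s-2}{w-1}^2$ of all pairs from Theorem~\ref{directed}, and the subtracted term counts the crossing pairs, so the formula reads pleasantly as ``parallelogram $=$ directed with non-crossing boundary paths''. It then remains to simplify the determinant to the Narayana number $\frac1{\s-1}\binom{\s-1}{w}\binom{\s-1}{h}$; with $\binom{\s-1}{h}=\binom{\s-1}{w-1}$ and Pascal's rule this is a short, routine binomial identity.

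Finally, for the perimeter statement I would sum over the antidiagonal $w+h=\s$. Writing $\binom{\s-1}{w-1}=\binom{\s-1}{\s-w}$ and applying Vandermonde's convolution gives $\sum_{w=1}^{\s-1}\binom{\s-1}{w}\binom{\s-1}{w-1}=\binom{2\s-2}{\s}$, hence
\begin{equation*}
  B_\s=\sum_{w+h=\s}B_{wh}=\frac1{\s-1}\binom{2\s-2}{\s}=\frac1{\s}\binom{2\s-2}{\s-1},
\end{equation*}
the $(\s-1)$st Catalan number; the last equality is the identity $\frac1{n}\binom{2n}{n-1}=\frac1{n+1}\binom{2n}{n}$ for $n=\s-1$. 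This is the classical fact that the Narayana numbers of order $\s-1$ sum to a Catalan number, and it finishes the proof.
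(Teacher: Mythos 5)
Your proposal is correct and follows exactly the route the paper intends: it encodes the parallelogram polyomino by its two boundary paths, applies Lemma~\ref{lemma-intersecting} (the two-path Gessel--Viennot determinant) to the non-crossing pair, and simplifies to the Narayana number; the paper itself only gestures at this via the remark that the proposition ``can be easily obtained by Lemma~\ref{lemma-intersecting}.'' Your determinant, its reduction to $\frac1{\s-1}\binom{\s-1}{w}\binom{\s-1}{h}$, and the Vandermonde summation to the Catalan number are all accurate.
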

For a bijective proof of the second formula and
some references to its history see Stanley~\cite[p.~66]{stanley}.
\section{Conventions}

Before going into the proofs, we clarify some notations that we will use.
We label the four sides of $R$ by N, S, E, W. Accordingly, we call the corners of
$R$ NE corner, SE corner, and so on.
We place $R$ in the first quadrant with the SW corner at the origin.



By
an \emph{edge}, we mean a unit-length boundary segment of a grid
square.
On the boundary of a polyomino,
several edges may lie straight in succession,
but they are considered as separate edges.

If $P$ is a polyomino with enclosing rectangle $R$ and
$K\in\{\textrm{N, S, E, W}\}$, then the \emph{$K$-side} of $P$ is the
intersection of $P$ with the respective side of $R$.



\section{Directed Convex Polyominoes}
\label{sec:directed-width}
We start with
the easier and more restricted case of directed polyominoes, i.e.,
polyominoes that contain the SW corner $(0,0)$
(Theorem~\ref{directed}). This serves also as a preparation for
our treatment of general polyominoes in Section~\ref{sec:general},
where some of the arguments will recur.


We will relate directed polyominoes to certain
monotone paths in the square lattice.
A \emph{monotone path}
is a lattice path whose steps go in only two orthogonal directions.

Traverse the boundary clockwise, starting from the origin. Between
the N-side and the E-side, the boundary intersects the diagonal
 $y-x=h-w$ through the NE corner at a unique point
 $X=(w,h)-(i,i)$ for some $i\ge 0$.
see Figure~\ref{fig:split}.

\begin{figure}
  \centering
  \includegraphics{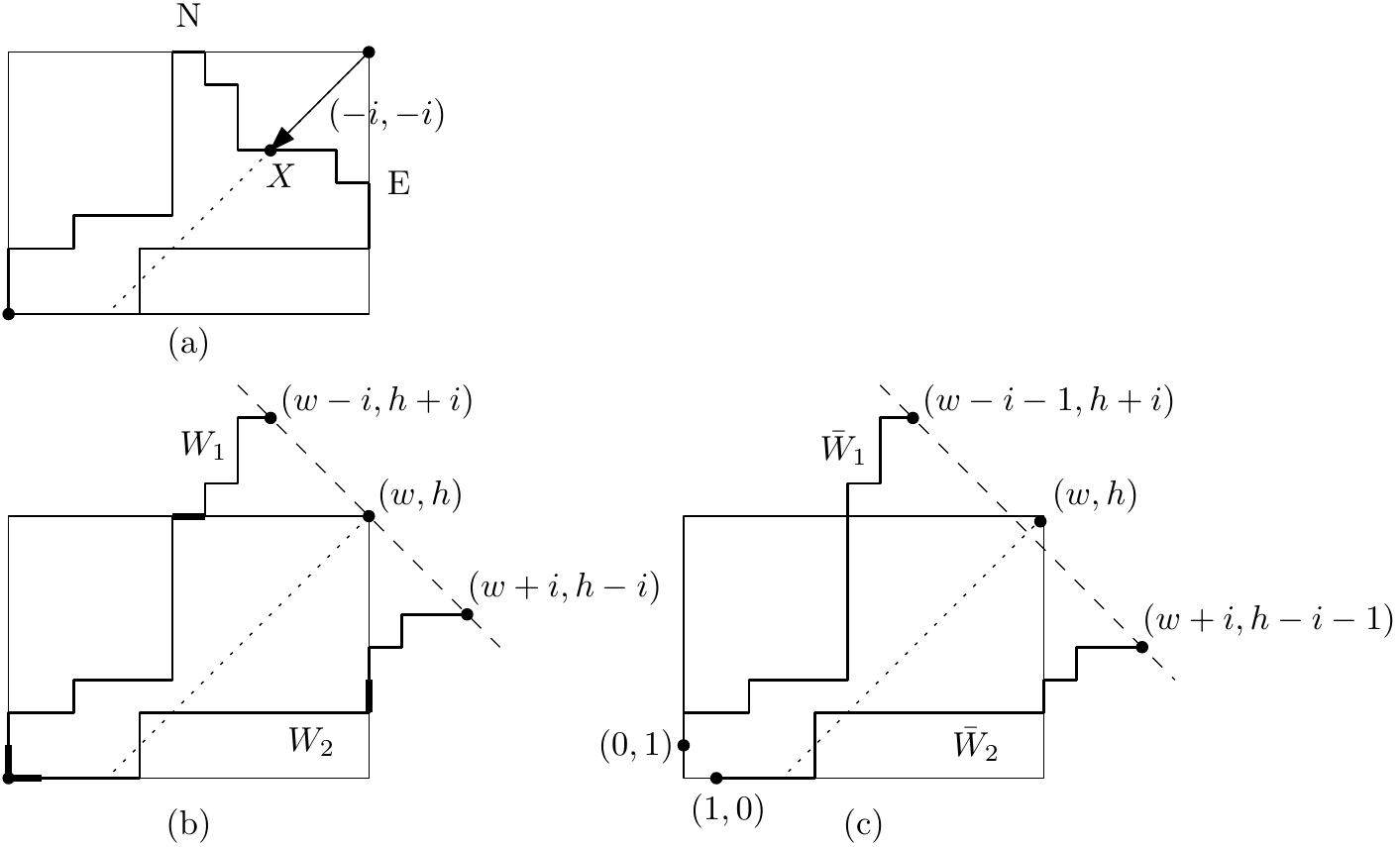}
  \caption{Splitting the boundary at the diagonal (a) and reflecting at the N-side
    and E-side. (b)~An implicit edge on each side of $R$ is highlighted. (c) The implicit edges from the N-side and the E-side
    are removed.}
  \label{fig:split}
\end{figure}

We cut the boundary path at $X$.
We reflect the path between the N-side and $X$ across the N-side
of $R$.
Similarly, we reflect the path between $X$ and the E-side across the E-side
of $R$. We get two monotone paths $W_1$ and $W_2$,
ending in $(w,h)+(-i,i)$,
and $(w,h)+(i,-i)$.
It would be easy to count pairs of these paths but the transformation
would not be bijective. We have to ensure two properties.
\begin{enumerate}
\item [(a)] The boundary must not cross itself.
\item [(b)] The boundary must have at least one edge
  on each of the four sides of $R$.
\end{enumerate}
We take care of (b) by declaring the first edge on the E- and the N-side
to be an \emph{implicit edge}. If we remove these edges from the
two paths, we obtain
paths $\bar W_1$ and $\bar W_2$ that end in
 $(w,h)+(-i-1,i)$,
 and $(w,h)+(i,-i-1)$.
The implicit edges on the W-side and the S-side are easier to take care of: We
know that the boundary starts with an up-step and ends with a
left-step.
Thus, we simply let
$\bar W_1$ start in the point $(0,1)$,
and we let
$\bar W_2$ start $(1,0)$.
Now we have a bijection: There is a one-to-one correspondence
between directed polyominoes and pairs
$(\bar W_1,\bar W_2)$ of \emph{non-intersecting} monotone lattice paths
from $(0,1)$ to $(w,h)+(-i-1,i)$,
and from
 $(1,0)$ to $(w,h)+(i,-i-1)$, for some $0\le i<\min\{w,h\}$.
Indeed, given such a
 pair 
 of {non-intersecting} paths, we re-insert the implicit edge into each
 path, reflect the paths outside $R$ back, and join the two pieces
 together.
 This process is reversible, and it does not introduce or destroy intersections.

We still have to take care of self-intersections.
It is known how to count
 pairs of non-intersecting monotone lattice paths. 
 We will use the basic Lemma~\ref{lemma-intersecting} below,
 which is the first nontrivial case of
 the Gessel--Viennot Lemma.
We include its proof because we will extensively use the underlying uncrossing idea.
The Gessel--Viennot Lemma is more general: it counts families of non-intersecting paths between more
 than two pairs of terminal nodes in acyclic directed graphs.


 For integer points $p,q,u,v$ in the plane, we denote by
 $\bar X_{p,q}^{u,v}$
 the number of \emph{non-intersecting} pairs of monotone grid paths in the NE direction from $p$ to
 $u$ and from $q$ to $v$,
and by
 $ X_{p,q}^{u,v}$
 the number of \emph{intersecting}  pairs of such paths.
By
 $N_p^u$, we denote the number of monotone paths in the NE direction from $p$ to
 $u$.
This 
is equal to a binomial coefficient, for which we temporarily introduce
the abbreviation
\begin{equation*}
  Z_x^y=\binom {x+y}x.
\end{equation*}
Thus,
\begin{equation*}
  N_{(a,b)}^{(c,d)}
=  N_{(0,0)}^{(c-a,d-b)} = Z_{c-a}^{d-b}.
\end{equation*}
\begin{lemma}\label{lemma-intersecting}

Assume that the number of non-intersecting pairs of paths
 from $p$ to $v$ and from
$q$ to $u$ is zero.
Then the number of intersecting pairs of paths
 from $p$ to $u$ and from
$q$ to $v$ is

  \begin{equation}
    \label{eq:intersect}
 X_{p,q}^{u,v} = N_p^vN_q^u.
  \end{equation}
Consequently, the number of non-intersecting pairs of paths is
  \begin{equation}
    \label{eq:non}
    \bar X_{p,q}^{u,v} = N_p^uN_q^v - N_p^vN_q^u.
  \end{equation}
%
\end{lemma}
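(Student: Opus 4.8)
The plan is to count the pairs by the standard uncrossing involution, which is the heart of the Gessel--Viennot argument. First I would note that the paths in question are monotone in the NE direction, so any two lattice points shared by two such paths are comparable in the coordinatewise partial order. In particular, if a pair of paths intersects, their common points form a chain, and there is a well-defined \emph{first} common point $X$, namely the minimal one, closest to the starting corners. I would also record the easy bookkeeping identity that the total number of pairs consisting of a path from $p$ to $u$ and a path from $q$ to $v$ is $N_p^u N_q^v$, since the two paths are chosen independently; this splits as $N_p^u N_q^v = X_{p,q}^{u,v} + \bar X_{p,q}^{u,v}$.

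The central construction is tail-swapping. Given an intersecting pair consisting of a path $P_1$ from $p$ to $u$ and a path $P_2$ from $q$ to $v$, let $X$ be their first common point and exchange the portions of the two paths lying after $X$. This produces a path $Q_1$ from $p$ to $v$ (the head of $P_1$ up to $X$ followed by the tail of $P_2$ after $X$) and a path $Q_2$ from $q$ to $u$; the new pair still passes through $X$, hence is again intersecting. I would then check that $X$ is also the first common point of $(Q_1,Q_2)$: the initial segments up to $X$ are unchanged and meet only at $X$ (because $X$ was first for $(P_1,P_2)$), while every other common point of $Q_1$ and $Q_2$ lies on the two tails and is therefore at or beyond $X$ in the NE order. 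Consequently applying the swap a second time restores $(P_1,P_2)$, so the map is an involution and therefore a bijection between the intersecting pairs from $p$ to $u$ and $q$ to $v$ on the one hand, and the intersecting pairs from $p$ to $v$ and $q$ to $u$ on the other.

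Now the hypothesis does the remaining work. Since by assumption there are \emph{no} non-intersecting pairs from $p$ to $v$ and from $q$ to $u$, every such pair is intersecting, so the number of intersecting pairs on that side equals the total number of pairs, which factors as $N_p^v N_q^u$. The bijection then yields $X_{p,q}^{u,v} = N_p^v N_q^u$, which is \eqref{eq:intersect}. Finally \eqref{eq:non} follows by subtraction from the bookkeeping identity above: $\bar X_{p,q}^{u,v} = N_p^u N_q^v - X_{p,q}^{u,v} = N_p^u N_q^v - N_p^v N_q^u$.

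The main obstacle, and really the only delicate point, is the well-definedness of the involution: I must pin down ``first common point'' unambiguously and verify that tail-swapping preserves it, so that the map squares to the identity rather than merely mapping one set into another. Everything else---the product formula $N_p^u = Z_{\cdot}^{\cdot}$ for the number of monotone paths, and the concluding subtraction---is immediate. It is worth stressing that the hypothesis is essential: without it the swap is only a bijection between the intersecting pairs on both sides, and the clean product formula $N_p^v N_q^u$ for $X_{p,q}^{u,v}$ would fail.
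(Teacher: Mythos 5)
Your proposal is correct and follows essentially the same route as the paper: a crossover (tail-swap) at the first intersection point, with the hypothesis guaranteeing that every swapped-endpoint pair intersects, so the two operations are mutually inverse bijections. Your extra care in verifying that the first common point is preserved under the swap is a welcome elaboration of a detail the paper leaves implicit, but it is not a different argument.
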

\begin{proof}
  For each intersecting pair of paths we can perform a crossover switch at
  the first intersection point, leading to a pair of paths where the
  end terminals are swapped.
Conversely, every pair of paths with swapped endpoints must intersect,
by assumption, and hence we can perform a crossover at the first
intersection point.
These two operations are inverses, thus establishing
\eqref{eq:intersect} by bijection.
\end{proof}

%


The lemma gives
\begin{align}
  D_{wh}&= \sum_{i\ge 0} \nonumber
          \bar X_{(0,1),(1,0)}^{(w-i-1,h+i),(w+i,h-i-1)}
\\&= \sum_{i\ge 0}\nonumber
 (   \numpath{w-i-1}{h+i-1}
    \cdot
    \numpath{w+i-1}{h-i-1}
    -
    \numpath{w-i-2}{h+i}
    \cdot
    \numpath{w+i}{h-i-2}
) \\
&=    \numpath{w-1}{h-1}
    \numpath{w-1}{h-1}
    -
    \numpath{w-2}{h}
    \numpath{w}{h-2}
+
    \numpath{w-2}{h}
    \numpath{w}{h-2}
    -
    \numpath{w-3}{h+1}
    \numpath{w+1}{h-3}
+
    \numpath{w-3}{h+1}
    \numpath{w+1}{h-3}
  - \cdots.
  \label{telescope}
\end{align}
This is a telescoping sum. Its terms become eventually zero, because
the horizontal distances ${w-i-1}$ and the vertical distances
${h-i-1}$ become negative for large~$i$.
Thus, only the very first term remains, and therefore
\begin{equation*}
  D_{wh}=    \numpath{w-1}{h-1}
    \numpath{w-1}{h-1}
          =\binom{\s-2}{w-1}^2.
       \eqno   \qed
\end{equation*}

\subsection{A bijection for directed convex polyominoes}
\label{bijection}

The formula $\binom{\s-2}{w-1}^2$ has an obvious combinatorial
interpretation: it counts ordered \emph{pairs} of monotone paths from
$(0,0)$ to $(w-1,h-1)$.  By analyzing the cancellation that happens in
the telescoping sum~\eqref{telescope}, we can derive a bijection to
directed convex polyominoes of height $h$ and width $w$.

\begin{figure}
  \centering
  \includegraphics{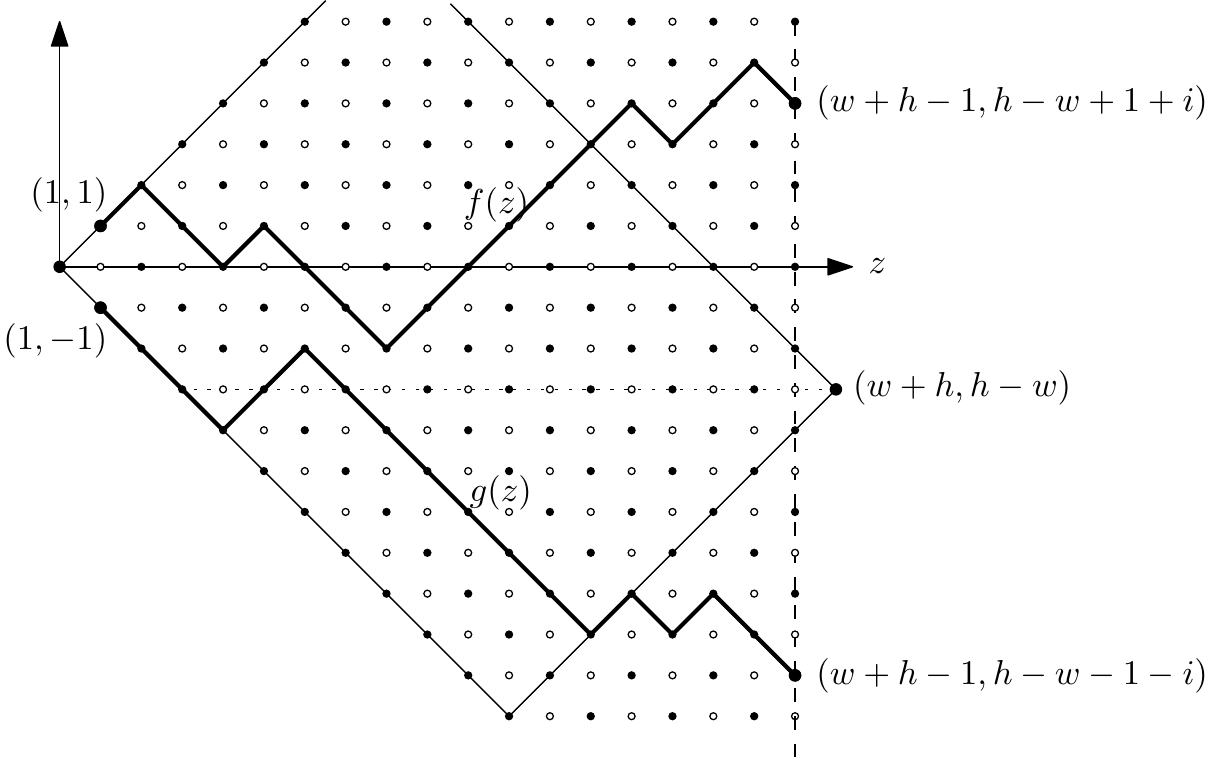}
  \caption{Two functions $f$ and $g$.}
  \label{fig:updown}
\end{figure}

We start from a pair of paths from $(0,0)$ to $(w-1,h-1)$.
It is more convenient to turn the grid by $45^\circ$ and scale it by
$\sqrt2$, using the transformation $\binom xy \mapsto \binom
{x+y}{y-x}$. We let the first path start at $(1,1)$ and the second path at
$(1,-1)$, like the desired paths of Figure~\ref{fig:split}c,
see Figure~\ref{fig:updown}.
This leads to two paths with endpoints
$
(w+h-1,h-w+1)$
and
$
(w+h-1,h-w-1)$.
The paths can be viewed as graphs of two functions
$f,g\colon \{1,2,\ldots,s-1\} \to \mathbb Z$ that make
steps $\pm 1$: $|f(z+1)-f(z)|=1$
and $|g(z+1)-g(z)|=1$.
Now we will modify these two functions to ensure that they don't intersect.
In general, we will have two functions on 
$\{1,2,\ldots,s-1\}$.
The starting heights are $+1$ and $-1$, and the
ending heights are given by
$
 {
h-w+1+2i}$ and
${
  h-w-1-2i}
$,
for some $i\ge 0$. Initially $i=0$.
If the paths intersect we make a crossover at the rightmost
intersection point, swapping the initial parts to the left of this
point.
Then
we shift the first path by $+2$ and the second path by $-2$.
Schematically, this is shown as follows (see also Figure~\ref{fig:bijection} for an example):
\begin{displaymath}
  \left[
    \begin{matrix}
      {+1\to h{-}w{+}1+2i}\\
{-1\to h{-}w{-}1-2i}
    \end{matrix}
  \right]
  \stackrel{\mathrm{crossover\strut}}\implies
  \left[
    \begin{matrix}
{-1\to h{-}w{+}1+2i}
\\{+1\to h{-}w{-}1-2i}
    \end{matrix}
  \right]
    \stackrel{\text{vertical shift\strut}}\implies
  \left[
    \begin{matrix}
 {+1\to h{-}w{+}1+2i+2}\\
{-1\to h{-}w{-}1-2i-2}
    \end{matrix}
  \right]
\end{displaymath}
We repeat this procedure as long as the functions intersect.
The same procedure would also work if we systematically take the
\emph{leftmost} intersection.
Both versions, if implemented naively, take quadratic time.
The current version is better because the parts to the right of the
intersection point are always moved further apart. This means that no
new intersection points appear there, and we can find the sequence of
rightmost intersection points in a single right-to-left sweep.

The changes left of the intersection points are best understood
in terms of their
 effect on the \emph{difference} function $\delta(z) := f(z)-g(z)$.
Swapping $f$ and $g$ amounts to negating~$\delta$, and the vertical
shift will add~$4$ to $\delta$. Together, these two operations
will perform the mapping $\delta\mapsto 2-\delta$.

\begin{figure}
\centering
	\begin{tabular}{cp{3mm}c}
	\includegraphics[page=1,scale=.9]{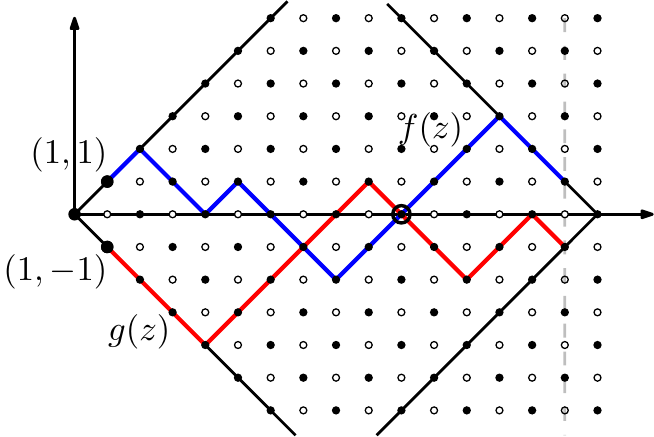} & &
		\includegraphics[page=2,scale=.9]{bijection} \\
		(a) && (b) \\[.5cm]
	\includegraphics[page=3,scale=.9]{bijection} & &
		\includegraphics[page=4,scale=.9]{bijection}\\
		(c) && (d)
	\end{tabular}
	\caption{(a)--(c): A sequence of untangling operations for an
          intersecting path pair. 
          The rightmost intersection pairs are marked.
          (d): The final mapping from the non-intersecting paths to the polyomino.}
	\label{fig:bijection}
\end{figure}

Performing this operation twice will restore the original function.
We can therefore leave the functions intact and, instead of searching
for zeros of $\delta$, alternately search for the values 0 and 4. The
following is
a streamlined version of this algorithm, where $\delta$ does not
explicitly appear.
After the sequence of intersection points is determined in the first phase, we compute
the resulting
noncrossing paths
$F,G\colon \{1,2,\ldots,s-1\} \to \mathbb Z$ of the bijection in a
left-to-right sweep.

\medskip
\begin{minipage}[t]{0.3\linewidth}
\begin{tabbing}
$--$ {Phase 1. Right-to-left
  sweep}:\\
$--$ {identify all intersection points}\\
$\textit{swapped} := \textit{False}$\\
$\textit{target} := 0$\\
$s := 0$\\
\textbf{for} $z := \s -1, \s -2, \ldots, 2, 1$:\\
\qquad \=\+\textbf{if} $f[z]-g[z]=\textit{target}:$\\
\qquad \=
$\textit{intersection}[z] := \textit{True}$\\
\>$\textit{swapped} := \textbf{not}\ \textit{swapped}$\\
\>$\textit{target} := 4-\textit{target}$\\
\textbf{else}:\\
\>$\textit{intersection}[z] := \textit{False}$
\end{tabbing}
\end{minipage}
\quad\vrule \quad\
\begin{minipage}[t]{0.4\linewidth}
\begin{tabbing}
$--$ {Phase 2. Left-to-right  sweep}:\\
\textbf{if} \textit{swapped}:\\
\qquad\=
$\textit{shift} := 2$\\
\textbf{else}:\\
\>$\textit{shift} := 0$\\
\textbf{for} $z := 1,2,\ldots, \s -1$:\+\\
\textbf{if} $\textit{swapped}:$\\
\qquad\=
$F[z] = g[z]+\textit{shift}$\\
\>$G[z] = f[z]-\textit{shift}$\\
\textbf{else}:\\
\>$F[z] = f[z]+\textit{shift}$\\
\>$G[z] = g[z]-\textit{shift}$\\
\textbf{if }$\textit{intersection}[z]:$\\
\>$\textit{swapped} := \textbf{not}\textit{ swapped}$\\
\>$\textit{shift} := \textit{shift} + 2$
\end{tabbing}
\end{minipage}

\subsection{Two-colored Grand-Motzkin paths}
\label{sec:motzkin}

When we found our proof,
we were not aware of the bijective proof
of 
Theorem~\ref{directed}
by
Barcucci, Frosini, and Rinaldi
~\cite{BFR}.
 There are differences as well as commonalities,
which we will now discuss,
 glossing over superficial details.
In particular, the idea of cutting at the point of intersection $X$ with the
diagonal through the upper right corner is also used.
This leads, in our case, to a pair of functions $f,g$ such that $f$
always remains above~$g$.
Barcucci et~al.~\cite[Section~3]{BFR}
represent the same information in terms of the difference function $\delta=f-g$.
This function
has only even values.
When $\delta$ makes a step of size~$2$, i.\,e.,
$\delta(z+1)=\delta(z)\pm2$, it is possible to infer the change of
$f$ and $g$.
However,
 if $\delta$ is stationary, this means that $f$ and $g$ either both
go up or both
go down.  One therefore has to distinguish these possibilities by
marking each horizontal step of $\delta$ with one of two colors.
After dividing $\delta$ by 2,
this leads  to the notion of a
``two-colored Grand-Motzkin path''
as an alternative way to represent the pair $(f,g)$:
A Grand-Motzkin path is a sequence of steps from $\{0,\pm 1\}$
starting and ending at the same height, 
and a Motzin path has the additional
requirement of never going below the initial height.

The main part of the proof concerns
the bijective
mapping from a pair $(f,g)$ of functions with difference
$\delta(\s -1)=2$ at the endpoint to a pair $(F,G)$ of
\emph{noncrossing} functions with endpoints at difference
$F(\s -1)-
G(\s -1)
=2+4i$, for some $i$.
In contrast to our bijection, which scans the sequence from right to left,
Barcucci et~al.~\cite[Section~3.1.3]{BFR} process the sequence from
left to right.
Their bijection has a
very elegant pictorial description
in terms of Grand-Motzkin paths
\cite[Figs.~13--14]{BFR}.
Translated to our notation, it can be described as follows:
Look for the leftmost point $z$ where
$\delta(z)=0$,
$\delta(z)=-2$,
$\delta(z)=-4$, etc.
Arriving at such a point, $f$ must have taken a down-step:
$f(z)=f(z-1)-1$, and  $g$ must have taken an up-step:
$g(z)=g(z-1)+1$.
 Modify $f$ by adding 2 to $f(z),f(z+1),\ldots$, changing the
down-step into an up-step, and modify $g$ similarly by
subtracting~$2$.
If the minimum value of $\delta$ is initially $2-2i$, then
this procedure is carried out cumulatively for  $i$
different positions $z$,
and the
final difference at the endpoints is
$
F(\s -1)-G(\s -1)
=2+4i$. We have achieved that $F(z)\ge G(z)+2$ throughout.
This bijection
is
clearly different
from
our algorithm, which was derived from the cancellation
in~\eqref{telescope} and from Lemma~\ref{lemma-intersecting}.
It is simpler 
as it does not need to
swap parts of $f$ and~$g$.

The inverse mapping, from $(F,G)$ to $(f,g)$ is just as easy:
If
$
F(\s -1)-G(\s -1)
=2+4i$,
perform a right-to-left scan:
look for the rightmost points $z$ where
$F(z)-G(z)=2i$,
$
2i-2$,
\ldots, 
$4$,
$
2$.
These are the points where up-shifts must be inverted to down-shifts
and vice versa.

\section{General Convex Polyominoes}
\label{sec:general}
\subsection{S-walks}
We will first introduce a definition that includes all convex polyominoes
but also allows some self-intersection of the boundaries.

An \emph{S-walk} is
a shortest closed walk on the grid that
spans its enclosing rectangle~$R$.
By \emph{spanning}, we mean that it contains at least one edge
on each of the four sides of $R$, while \emph{shortest} says that the length
of the walk equals the perimeter of $R$; in particular, the S-walk is
contained in $R$.
(The letter S in S-walks
has nothing to do with the \textsl{S}outh direction.)
We orient the S-walk by the convention that
the edges on the S-side are traversed from right to left, i.e.,  in the W direction.
(If the S-walk is non-intersecting, this means that it is oriented
clockwise.)
If $R$ is an $h\times w$ rectangle, then an S-walk consists of
exactly $h$ steps in each of the E and W directions,
and exactly $w$  steps in each of the N and S directions,
 so its total length is $2(h+w)$.
 An S-walk forms the boundary of a convex polyomino if and only if it
 does not intersect itself.

The S-walk will not necessarily visit the four sides in the correct
order.
We choose the leftmost point on the S-side as the starting point,
as shown in Figure~\ref{fig:example}.
Since we are moving
 in the W direction, the S-walk will meet the W-side
before the E-side. This leaves three possible orders in
which
the walk can hit the sides: SWNE, SNWE, and SWEN. They are
shown schematically in Figure~\ref{fig:walks}.
The ``normal'' order, which corresponds to the non-intersecting case, is
SWNE.
In the two other orders, an intersection is forced.
Removing the parts where the S-walk touches the rectangle boundary
decomposes it into four pieces,
each of which may be empty.
Each piece is a monotone grid path, and we classify these paths either
as \emph{rising}
or \emph{falling}, by looking at them as the graph of a function
and reading them from left to right (independent of the traversal
direction).
Thus, for example the path between the W-side and the N-side is
always rising.
\begin{figure}
  \centering
  \includegraphics{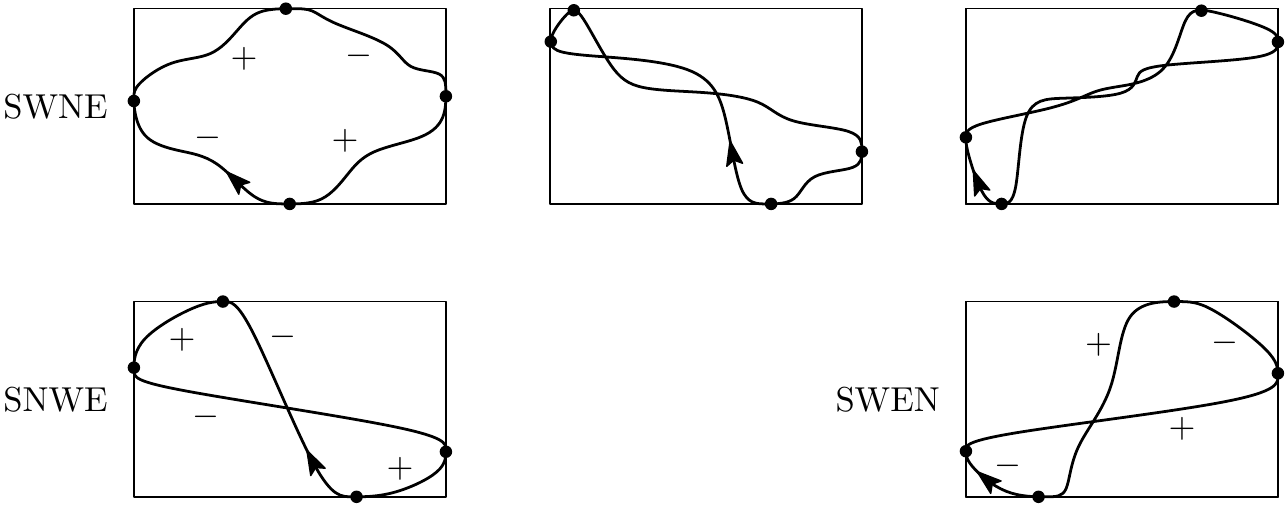}
  \caption{The possible orders in which an S-walk can touch the four
    sides of the enclosing rectangle, and the possibilities of
    self-intersections. Some rising and falling parts are marked by
    $+$ and~$-$.}
  \label{fig:walks}
\end{figure}
Whenever the walk hits a boundary edge, it alternates between
rising and falling. Thus, there will always be two rising
paths and two falling paths.  Looking at Figure~\ref{fig:walks}, it
is easy to convince oneself that a self-intersection can occur either
between the two rising paths, or between the two falling paths,
but not both simultaneously.  For example, in the SWNE case, the two falling paths
can cross only if the rightmost point of the walk on the N-side is to
the left of the leftmost point of the walk on the S-side, and the
bottommost point of the walk on the W-side is higher than the topmost
point of the walk on the E-side.
 This precludes the two rising paths from intersecting.
Of course, there may be multiple intersections between two rising/falling paths.

Thus, given an $h \times w$ enclosing rectangle, denoting by $\widetilde P_{wh}$ the number of all S-walks,
and by $P^*_{wh}$ the number of S-walks in which the \emph{rising} paths intersect,
we can compute
the number
$P_{wh}$
of convex polyominoes by the formula
\begin{equation}\label{all-convex}
  P_{wh} =
  \widetilde P_{wh} -
  2P^*_{wh}.
\end{equation}
The factor 2 reflects the fact that the intersection of the falling
paths is symmetric to the rising case.
In the following two subsections, we will determine $  \widetilde P_{wh}$ and
$P^*_{wh}$.

\subsection{The number of all S-walks}
\label{sec:number-S-walks}

We start the walk at the leftmost point $(a,0)$ on the S-side
with an up-step. Given the starting point in the range
$0\le a \le w-1$,
we code the walk 
by recording a sequence over the alphabet
$\{\mathrm V,\mathrm H\}$, representing the walking steps sequence whether we make a vertical or horizontal step.
It is not necessary to record the orientation in which the steps go
(left versus right, or up versus down),
because this information can be recovered by tracing the path when it hits the boundary of the rectangle.
Furthermore, the walk must contain at least one edge on each of the four sides of the rectangle. These four steps are \emph{implicit steps} and are not coded.
A code is translated into an S-walk as follows:
We start at $(a,0)$ and follow the $\{\mathrm V,\mathrm H\}$-code to
trace the walk.
Whenever we
touch one of the four sides W,N,S,E of $R$ for the first time, an
implicit step along that side is inserted.
In addition, the first up-step at $(a,0)$ is also not coded. If $a=0$, the
first up-step coincides with the implicit W-step.
(This corresponds to directed polyominoes.)
 Then the code
consists of
$2h-2$ V's
and
$2w-2$ H's.
If $1\le a\le w-1$, we need only
$2h-3$~V's.
In total, the number of S-walks is therefore
\begin{align}
  \widetilde P_{wh}
  &=
               \binom{2\s-4}{2w-2} + (w-1)\binom{2\s-5}{2w-2}
        =
               \binom{2\s-4}{2w-2} + \binom{2\s-6}{2w-3}\frac{2\s-5}2.
\label{s-walks}
\end{align}
The factor $w-1$ counts the number of nonzero possibilities for $a$.

This coding is very similar to the code used by
Hochstättler, Loebl and Moll~\cite{HLM}
for generating convex polyominoes of given perimeter,
see also Section~\ref{perimeter}.

\subsection{Self-intersecting S-walks}
\label{self-intersecting}

To count S-walks where the two rising paths intersect,
we adapt the approach of
Section~\ref{sec:directed-width},
splitting the walk at diagonals and moving falling paths outside
by reflection.
\begin{figure}
  \centering
  \includegraphics{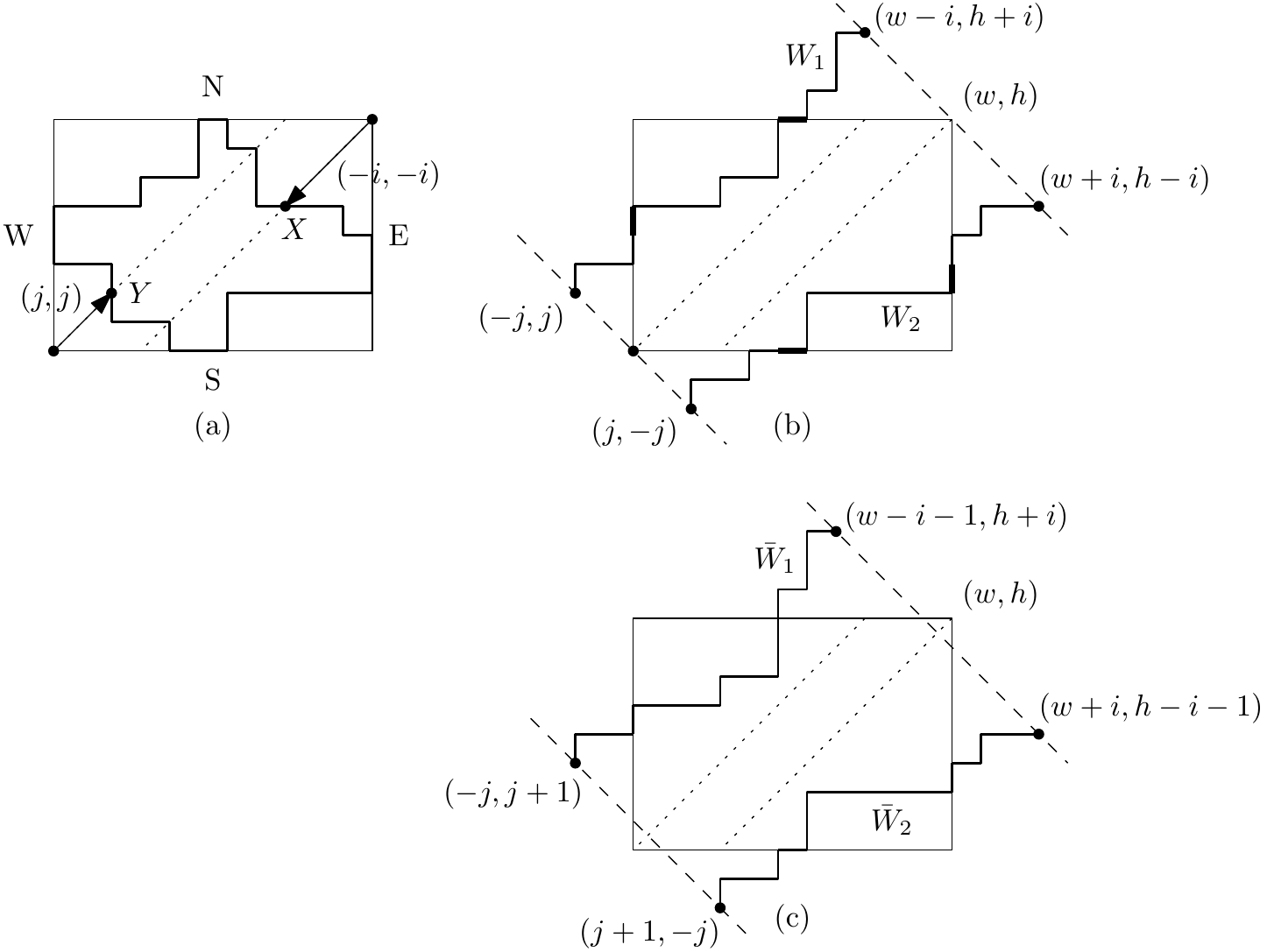}
  \caption{Splitting the boundary at two diagonals (a) and reflecting
    at each side. (b)~An implicit edge on each side of $R$ is
    highlighted. (c) The four implicit edges are removed.}
  \label{fig:split2}
\end{figure}
The two relevant cases are  SWNE (the ``normal case'') and SWEN (the
``flipped case'').
This time, we split the curve not only
at the upper right part but also at the lower left part,
see Figure~\ref{fig:split2}.
More precisely, in addition to splitting the S-walk
at the intersection point $X$
of the falling path between
the N-side and E-side with the diagonal through the NE corner, we also split it
at the intersection point $Y$
of the falling path between
the S-side and W-side with the diagonal through the SW corner.
Both in the normal case SWNE, which is illustrated in Figure~\ref{fig:split2},
and in the flipped case SWEN, there is a falling path between S and W and another falling path between N and E, so that the intersection points $X$ and $Y$ are well-defined.

The two resulting paths intersect if and only if the two rising
paths of the S-walk intersect. Thus we are interested in
the number of intersecting pairs of paths.
In the normal case SWNE, this is
\begin{align}
 \sum_{i\ge 0} \sum_{j\ge 0}
           X_{(j+1,-j),(-j,j+1)}^{(w+i,h-i-1),(w-i-1,h+i)}
= \sum_{i\ge 0} \sum_{j\ge 0}
\label{cross-2}
           N_{(-j,j+1)}^{(w+i,h-i-1)}
           N_{(j+1,-j)}^{(w-i-1,h+i)}.
\end{align}
For the flipped case SWEN, we get a pair of
paths
from
 $(-j,j+1)$ to
           $(w+i,h-i-1)$
and from
           $(j+1,-j)$ to $(w-i-1,h+i)$.
 All these path pairs are forced to intersect
by the positions of their endpoints,
 and their number equals
the same expression~\eqref{cross-2}. Consequently,
\begin{equation} \nonumber 
  P_{wh}^*
 = 2
    \sum_{i\ge0}
    \sum_{j\ge0}
           N_{(-j,j+1)}^{(w+i,h-i-1)}
           N_{(j+1,-j)}^{(w-i-1,h+i)}
= 2
    \sum_{i\ge0}
    \sum_{j\ge0}
    \binom{\s-2}{w+i+j}
    \binom{\s-2}{w-2-i-j}.
\end{equation}
We note for later use that the number of
S-walks of the normal type SWNE
in which the {rising} paths intersect
is $P_{wh}^*/2$.

Since $i$ and $j$ always occur together in the above summation for
$P_{wh}^*$, 
we group terms with the same sum $i+j$:
\begin{align*}
  P_{wh}^*
  &= 2
    \sum_{i\ge0}
    \sum_{j\ge0}
    \binom{\s-2}{w+i+j}
    \binom{\s-2}{w-2-i-j}
    \\
  &= 2
    \sum_{k\ge0}
    \sum_{\substack{i+j=k\\i,j\ge0}}
    \binom{\s-2}{w+i+j}
    \binom{\s-2}{w-2-i-j}
= 2
    \sum_{k\ge0}
    (k+1)
    \binom{\s-2}{w+k}
    \binom{\s-2}{w-2-k}.
\end{align*}
In order to get rid of the factor $(k+1)$ we split it into
two terms
$
(w+k)-(w-1)$, which can be more readily incorporated into the
binomial coefficients:
\begin{align*}
  P_{wh}^*
  &=
    2
    \sum_{k\ge0}
    (w+k)
    \binom{\s-2}{w+k}
    \binom{\s-2}{w-2-k}
-   2
    \sum_{k\ge0}
    (w-1)
    \binom{\s-2}{w+k}
    \binom{\s-2}{w-2-k}
    \\
  &=
    2
    \sum_{k\ge0}
    (\s-2)
    \binom{\s-3}{w+k-1}
    \binom{\s-2}{w-2-k}
-   2
    \sum_{k\ge0}
    (w-1)
    \binom{\s-2}{w+k}
    \binom{\s-2}{w-2-k}
    \\&= 2(\s-2)T_1 - 2(w-1)T_2.
\end{align*}
We will analyze the two sums $T_1$ and $T_2$ in the last expression separately.
For the first sum 
\begin{align*}
T_1=      \sum_{k\ge0}
    \binom{\s-3}{w+k-1}
    \binom{\s-2}{w-2-k},
\end{align*}
we have the following combinatorial interpretation, see
Figure~\ref{model}.
\begin{figure}[tbh]
  \centering
  \includegraphics{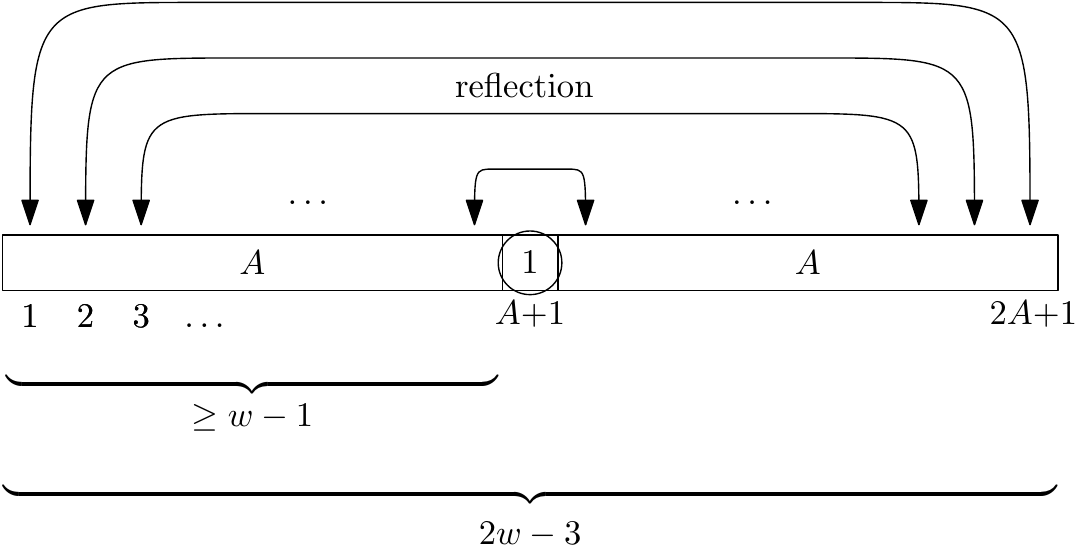}
  \caption{The combinatorial model for the expression $T_1$.}
  \label{model}
\end{figure}
Let $A=\s-3$.
From the numbers $1,2,\ldots$ up to $(\s-3)+(\s-2)=2\s-5=2A+1$, we have to select
a subset $S$ of $(w+k-1)+(w-2-k)=2w-3$ elements, under the additional constraint that
the number of elements selected out of the first $A=\s-3$ integers is at
least $w-1$. (More precisely, it is $w-1+k$.)
$T_1$ counts the number of these subsets.

We consider the subsets $S$ 
under the
reflection operation $S\mapsto \bar S$ which maps each integer $x$ to
its ``mirror image'' $
2A+2-x$. Obviously, for any pair $(S,\bar S)$,
at most one of the two sets can satisfy
the additional constraint. If $S$ contains at least $w-1$ elements in the first $A$ integers, then $S$ contains at most $2w-3-(w-1) = w-2$ in the last $A$ integers, so
$\bar{S}$ contains at most $w-2$ elements in the first $A$ integers. If both $S$ and $\bar{S}$ contain at most $w-2$ elements in the first $A$ integers, we have the \emph{exceptional pairs} $(S,\bar S)$, which are
characterized as
those pairs where
$S$ contains
exactly $w-2$ elements from the first $A$ integers,
exactly $w-2$ elements from the last $A$ integers,
and also the central element $A+1$.

Thus, to count the subsets that
 fulfill the additional constraint, we take all subsets of size $2w-3$,
 subtract the 
 exceptional pairs, and divide by~2.
 \begin{align*}
   T_1=\frac12\left[\binom{2\s-5}{2w-3} - \binom{\s-3}{w-2}^2\right]
 \end{align*}

 For the second sum
 \begin{align}
   \label{S2-1}
T_2=      \sum_{k\ge0}
    \binom{\s-2}{w+k}
    \binom{\s-2}{w-2-k},
\end{align}
an argument similar to $T_1$ is possible, but it is easier to argue
algebraically:
 Substituting $k=2-k'$ and renaming $k'$ to $k$ again gives
a summation of the same expression over a nearly complementary range of $k$.
\begin{align}
  \label{S2-2}
  T_2
  &=      \sum_{k'\le -2}
    \binom{\s-2}{w-2-k'}
    \binom{\s-2}{w+k'}
  =      \sum_{k\le -2}
    \binom{\s-2}{w+k}
    \binom{\s-2}{w-2-k}
\end{align}
Summing up \eqref{S2-1}
and \eqref{S2-2} gives almost the full range of integers $k$.
We add the term for $k=-1$ 
and get
\begin{align*}
  \sum_{k\le -2}+
  \sum_{k= -1}+
  \sum_{k\ge 0}
&  =      \sum_{k-\infty}^\infty
    \binom{\s-2}{w+k}
    \binom{\s-2}{w-2-k}
=    \binom{2\s-4}{2w-2}.
\\
\intertext{On the other hand, from
\thetag{\ref{S2-1}--\ref{S2-2}},}
  \sum_{k\le -2}+
  \sum_{k= -1}+
  \sum_{k\ge 0}
&  =
  T_2+  \binom{\s-2}{w-1}^2 + T_2,
  \end{align*}
and thus
 \begin{align*}
   T_2=\frac12\left[\binom{2\s-4}{2w-2} - \binom{\s-2}{w-1}^2\right].
 \end{align*}

 On the whole we get
\begin{align*}
  P_{wh}^*
  &= (\s-2)\left[\binom{2\s-5}{2w-3} - \binom{\s-3}{w-2}^2\right]
     - (w-1)\left[\binom{2\s-4}{2w-2} - \binom{\s-2}{w-1}^2\right]
\\  &= -(\s-2)\times \binom{\s-3}{w-2}^2
      + (w-1)\times \binom{\s-2}{w-1}^2,
\end{align*}
because the two leading terms cancel.
Taking out common factors, we get
\begin{align}\nonumber
  P_{wh}^*
  &=\left(
    \frac{ (\s-3)!}{(w-1)!(h-1)!}\right)^2
    \left[-(\s-2)\times (w-1)^2
    + (w-1)\times (\s-2)^2
    \right]
  \\\nonumber
  &=\left(
    \frac{ (\s-3)!}{(w-1)!(h-1)!}\right)^2
    (\s-2) (w-1)
    \left[- (w-1)
    +  (\s-2)
    \right]
  \\
  &=\left(
    \frac{ (\s-3)!}{(w-1)!(h-1)!}\right)^2
(\s-2)(w-1)(h-1)
           = (\s-3)\binom{\s-2}{w-1}\binom{\s-4}{w-2}.
\label{Pwh*}
\end{align}
Substituting 
 \eqref{Pwh*}
 and \eqref{s-walks}
 into  \eqref{all-convex} proves
Theorem~\ref{th:all-convex}. \qed

\subsection{Our proof and its relation to the proof of Guo and Zeng}
\label{sec:GZ}
Some of the ideas that we use are implicit in
the proof of
 Guo and
 Zeng~\cite{GZ}, of which we were not aware when we developed this work.
Their derivation 
makes heavy use of Lemma~\ref{lemma-intersecting}, but the
interpretation of the terms as crossing paths is soon lost in the algebraic manipulations.
We will review their approach, using our notation and terminology.

 The number of convex polyominoes are represented as a sum of five expressions:
$$P_{wh}=S_0+S_1+S_2-S_3-S_4$$
The first term,
$S_0=B_{wh}$, represents the number of parallelogram polyominoes, which
contain both the SW and the NE corners
(Proposition~\ref{proposition-parallelogram}).
$S_1$ represents the number of directed polyominoes and the
``opposite directed polyominoes'' that
contain the NE corner, but exclude the parallelogram polyominoes.
Thus, $S_1=2(D_{wh}-B_{wh})$, 
and it is possible to recover the formula for
$D_{wh}$
(Theorem~\ref{directed}) from the expressions for $S_0$ and
$S_1$ in~\cite[Eqs.~\thetag4 and \thetag{5}]{GZ}.
Like in our proof, the calculations leading to
$S_1$ involve a telescoping sum.

The number of the {remaining} polyominoes are represented by
$S_2-S_3-S_4$. Here
$S_2$ counts S-walks that visit the sides in the normal SWNE order,
with the exclusion of the SW and NE corners.
From this, one has to subtract
the number
$S_3$ of those walks where the falling pieces intersect,
and the number
$S_4$ of walks where the rising pieces intersect.
In our notation, we can express this as
$S_3=P^*_{wh}/2$, and indeed,
formula~\eqref{Pwh*} is in accordance with the expression for
$S_3$ in~\cite[Eq.~\thetag9]{GZ}.
$S_4$ has to be computed separately because
 the special treatment of the SW and NE corner has destroyed the
symmetry between ascending and descending intersections.
As mentioned in the introduction, the computation of $S_3$ and $S_4$ involves the
use of generating functions.

By contrast, in our proof, we have tried to reduce the amount of
computation and to give bijective proofs whenever possible.
Some ideas, like cutting at diagonals and reflecting along the
boundary,
were helpful in this respect.
Another distinguishing feature is that we include S-walks that
touch the boundary in an arbitrary order.
This has allowed us to keep the proof simple and elementary.
We did not have to resort to generating functions, and we needed only moderate
algebraic manipulations.

S-walks that touch the boundary in an arbitrary order also show up in
the rejection-sampling procedure of Hochstättler, Loebl and
Moll~\cite{HLM} for convex polyominoes of given perimeter, see
Section~\ref{perimeter}, but their connection to the positive terms in
\eqref{Pm} was not noted.

\section{Moments of Intersection Numbers Between Two Paths}
\label{moments}
In an early attempt to establish Theorem~\ref{th:all-convex}
we tried a different approach to treat intersecting paths:
In contrast to Lemma~\ref{lemma-intersecting}, which deals with intersections
by swapping the endpoints,
 we split the walk into an initial part
up to the last intersection point, plus a final part.
The final part is a directed convex polyomino, 
for which we know the formula.
This approach
 leads to a sum
that can be interpreted as
the total number of intersections of all
pairs of monotone paths between two points.
We failed to evaluate this sum directly.
However, as shown in Theorem~\ref{th:all-convex},
we found a different way to calculate the result.
Turning the argument around, we can therefore say something about
the total number of intersections of all
path pairs (the unnormalized first moment). We can even extend the method and compute
the sum of squares of these intersection numbers (the unnormalized
second moment).

 \begin{theorem}
Let $w,h\ge 0$, and set $\s=w+h$.
Let $\Paths_{wh}$
denote the set of the $\binom{\s}w$ monotone grid paths
from $(0,0)$ to $(w,h)$.
\begin{align}\label{first}
&    \sum_{U\in \Paths_{wh}}
    \sum_{V\in \Paths_{wh}}
    {\lvert U\cap V\rvert}
=  \binom{2\s+2}{2w+1}/2
\qquad\textup{\oeis{A091044}}
\\\label{second}
 &   \sum_{U\in \Paths_{wh}}
    \sum_{V\in \Paths_{wh}}
    \lvert U\cap V\rvert^2
=
(\s+1)\binom{\s+2}{w+1}\binom{\s}{w}
  -\binom{2\s+2}{2w+1}/2
\qquad  \textup{\oeis{A324010}}
\end{align}
\end{theorem}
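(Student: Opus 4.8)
The plan is to reduce both moments to generating-function identities that can simply be read off from the expansions already quoted in Section~\ref{sec-convex}. Write $F(w,h):=\binom{\s}{w}^2$ for the number of \emph{pairs} of paths in $\Paths_{wh}$, and for a lattice point $p$ let $N_p$ be the number of paths in $\Paths_{wh}$ through~$p$; more generally let $N_{p,q}$ be the number of paths through both $p$ and~$q$. Expanding $\lvert U\cap V\rvert=\sum_p \mathbf 1[p\in U]\,\mathbf 1[p\in V]$ and exchanging the order of summation gives
\[
\sum_{U,V}\lvert U\cap V\rvert=\sum_p N_p^2,\qquad
\sum_{U,V}\lvert U\cap V\rvert^2=\sum_{p,q}N_{p,q}^2,
\]
the second inner sum ranging over ordered pairs of lattice points.

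First I would recognize these as convolution powers of $F$. Splitting a path through $p=(a,b)$ at $p$ gives $N_p^2=F(a,b)\,F(w-a,h-b)$, so $\sum_p N_p^2=(F*F)(w,h)$, the coefficient of $X^wY^h$ in $\mathcal G(X,Y)^2$, where $\mathcal G(X,Y):=\sum_{w,h}F(w,h)X^wY^h$. For the second moment, $N_{p,q}$ vanishes unless $p,q$ are comparable; splitting a path through a comparable pair $p\preceq q$ into three pieces makes $N_{p,q}^2$ a product of three values of $F$, so that $\sum_{p\preceq q}N_{p,q}^2=(F*F*F)(w,h)$. Separating the diagonal $p=q$ (which reproduces $F*F$) and accounting for the two orderings of a strictly comparable pair yields
\[
\sum_{U,V}\lvert U\cap V\rvert^2=2\,(F*F*F)(w,h)-(F*F)(w,h),
\]
so everything is controlled by powers of $\mathcal G$. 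The key input is that $\mathcal G(x^2,y^2)=\Delta(x,y)^{-1/2}$: this is exactly the directed-polyomino generating function of Lin and Chang quoted after Theorem~\ref{directed}, reindexed by $w\mapsto w+1,\ h\mapsto h+1$ (which turns $\binom{\s-2}{w-1}^2$ into $F$). Hence the first moment is the coefficient array of $\mathcal G^2=\Delta^{-1}$ and the second moment that of $2\mathcal G^3-\mathcal G^2=2\Delta^{-3/2}-\Delta^{-1}$ (with $X=x^2,\ Y=y^2$).

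It then remains to identify these two coefficient arrays with the claimed closed forms, and here I would do the only genuine computation. For $\Delta^{-1}$ I would extract the odd--odd part of $1/(1-x-y)=\sum\binom{m+n}{m}x^my^n$: averaging over the four sign patterns of $(x,y)$ with signs $(+,-,-,+)$ and using the factorization $\Delta(x,y)=(1+x+y)(1+x-y)(1-x+y)(1-x-y)$ collapses the four terms to $2xy/\Delta(x,y)$; setting $m=2w+1,\ n=2h+1$ and $X=x^2,\ Y=y^2$ gives $\Delta^{-1}=\sum \tfrac12\binom{2\s+2}{2w+1}X^wY^h$, which is \eqref{first}. For $2\Delta^{-3/2}$ I would match against the \emph{negative} term $-4x^4y^4\Delta^{-3/2}$ of the generating function~\eqref{GF}: since that term is the generating function of $-2(\s-3)\binom{\s-2}{w-1}\binom{\s-4}{w-2}$, the substitution $w\mapsto w+2,\ h\mapsto h+2$, $X=x^2,\ Y=y^2$ shows $2\Delta^{-3/2}=\sum(\s+1)\binom{\s+2}{w+1}\binom{\s}{w}X^wY^h$. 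Subtracting the first-moment array then produces \eqref{second}.

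The main obstacle is really the second step: getting the convolution structure of the \emph{second} moment exactly right, i.e.\ correctly handling the comparable-versus-equal cases and the factor~$2$ from the two orderings, so that the identity is precisely $2\mathcal G^3-\mathcal G^2$ rather than a near miss. Once the problem is phrased this way the rest is bookkeeping, because both target arrays are, up to an index shift and the substitution $X=x^2,\ Y=y^2$, exactly the two summands of the polyomino generating function~\eqref{GF} that the paper has already put on the table. A purely bijective proof of \eqref{first} (folding a single path from $(0,0)$ to $(2w+1,2h+1)$ into a pair of paths with a marked common point) seems plausible but more delicate, so I would keep the generating-function route, which delivers both moments together.
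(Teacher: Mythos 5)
Your proof is correct, but it follows a genuinely different route from the paper. You reduce both moments to convolution powers of $F(w,h)=\binom{\s}{w}^2$ — the identities $\sum_{U,V}\lvert U\cap V\rvert=(F*F)(w,h)$ and $\sum_{U,V}\lvert U\cap V\rvert^2=2(F*F*F)(w,h)-(F*F)(w,h)$ are right, including the handling of the diagonal $p=q$ and the factor $2$ for ordered comparable pairs — and then evaluate the convolutions as coefficients of $\Delta^{-1}$ and $2\Delta^{-3/2}-\Delta^{-1}$. The paper instead proceeds bijectively: a path pair with one marked intersection point is mapped to a weak directed S-walk in a $(w{+}1)\times(h{+}1)$ rectangle (counted by the coding of Section~\ref{sec:number-S-walks}), and a pair with two marks is mapped to a self-intersecting S-walk counted by $P^*_{w+2,h+2}/2$ via the machinery of Section~\ref{self-intersecting}; your $(F*F)$ and $(F*F*F)$ are exactly these two counts, so the two arguments evaluate the same quantities by different means. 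Your sign-averaging extraction of the odd–odd part of $1/(1-x-y)$ to get $\Delta^{-1}=\sum\tfrac12\binom{2\s+2}{2w+1}x^{2w}y^{2h}$ is complete and checks out. The one soft spot is the second moment: you obtain the expansion $2\Delta^{-3/2}=\sum(\s+1)\binom{\s+2}{w+1}\binom{\s}{w}x^{2w}y^{2h}$ by citing the remark after Theorem~\ref{th:all-convex} that the negative term of~\eqref{GF} generates the negative terms of~\eqref{P}. That statement is true, but the paper only asserts it (with a pointer to Gessel); it is not proved there, and it does not follow from the paper's combinatorial formula~\eqref{Pwh*} without essentially redoing the paper's bijection. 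So to make your argument self-contained you would need to verify the $\Delta^{-3/2}$ expansion independently (for instance by differentiating the $\Delta^{-1/2}$ expansion or by an averaging computation analogous to your $\Delta^{-1}$ step). With that one identity supplied, your proof is a clean generating-function alternative that delivers both moments at once; what it gives up is the bijective content of the paper's version, which explains combinatorially why $P^*$ reappears in the second moment.
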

Dividing these formulas by the number $\binom \s w^2$ of path pairs yields
the (normalized) first moment, i.\,e., the mean,
and the 
second moment, from which
the variance can be calculated. 
The
intersection points of $U$ and $V$ include the
common endpoints, and therefore, $\lvert U\cap V\rvert$ is
always at least 2 (unless $w=h=0$).

%



\begin{proof}
We will establish a bijection
between the left side of~\eqref{first} and \emph{weak directed S-walks} in a
$(w+1)\times(h+1)$ rectangle,
see Figure~\ref{fig:split-off}. Such a
walk starts and ends at $(0,0)$ but it need not contain an edge of
the S- or W-side. The edges on the N- and E-side are, however, required.
Using the code in Section~\ref{sec:number-S-walks}, such a walk can be
coded with $2w+1$ V's and $2h+1$ H's. We orient the walk by requiring
that the N-side is touched before the E-side.

 The number $ D^-_{wh}$ of these walks is
\begin{displaymath}
     D^-_{wh}
  =
               \binom{2\s+2}{2w+1}/2.
\end{displaymath}
The factor $1/2$ accounts for the orientation.
\begin{figure}[tbh]
  \centering
  \includegraphics{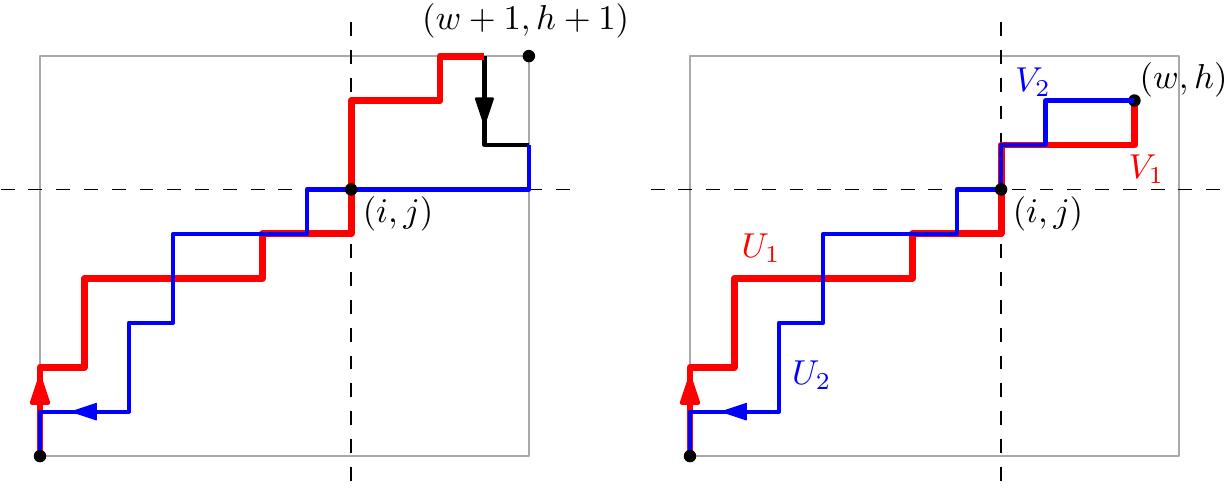}
  \caption{
A weak directed S-walk is
split at the highest intersection point and converted into a pair of paths.}
  \label{fig:split-off}
\end{figure}

In a weak directed S-walk, a self-intersection can only occur between the two
rising pieces.
Now we split the walk at the highest and rightmost
intersection point $(i,j)$. If there is no intersection point, we
set $(i,j)=(0,0)$. Now the part $P_2$ that was split off is
a directed convex polyomino, spanning the rectangle
$[i,w+1]\times[j,h+1]$,
and containing the SW corner $(i,j)$ of this rectangle.
We have established
through Theorem~\ref{directed}
that such directed convex polyominoes are in
bijection with ordered pairs $(V_1,V_2)$ of monotone paths
from $(i,j)$ to $(w,h)$.
After
splitting off $P_2$ from the S-walk,
an initial path $U_1$ from $(0,0)$ to $(i,j)$
and a final path $U_2$ from $(i,j)$ to $(0,0)$ remain.
We connect
$U_1$
with $V_1$ and the reverse of $U_2$
with $V_2$, obtaining
 a pair of paths from $(0,0)$ to
$(w,h)$. We mark the intersection point $(i,j)$.

Conversely, consider a pair of paths
 from $(0,0)$ to
$(w,h)$, in which one of the intersection points $(i,j)$ is marked.
We split off the parts after $(i,j)$ and turn them into a directed
polyomino, using the bijection of
 Theorem~\ref{directed}. We obtain a directed S-walk.

Thus we have established a bijection between
weak directed S-walks and
pairs of paths $W_1,W_2\in \Paths_{wh}$ in which an
intersection in $W_1\cap W_2$ is marked.
The number of objects of the latter type is the number of path pairs,
weighted by the number of intersections; this is the quantity that is
counted in~\eqref{first}.

We note that both the first point $(0,0)$ and the last
point
$(w,h)$ are valid intersection points to be marked.
A mark on the first point corresponds to
 directed S-walks without self-intersections.
If the last point is marked, the bijection will expand it into a path around a unit square.

To get the second moment~\eqref{second}, we perform the replacement on both ends.
We start with a pair of paths $(U,V)$ in which \emph{two} indistinguishable marks are
placed on intersection points. The two marks can be placed on the
same point.
The number of these objects is
\begin{displaymath}
      \sum_{U\in \Paths_{wh}}
    \sum_{V\in \Paths_{wh}}
    \binom{\lvert U\cap V\rvert+1}2.
\end{displaymath}
 The pair of paths after the higher mark is replaced by a
 directed
 polyomino as above.
 The pair of paths up to the lower mark is replaced by a
another
 directed
 polyomino, which is rotated by $180^\circ$: instead of the lower left
 corner, it contains the upper right corner
 of its enclosing rectangle.
We obtain an S-walk
 of the normal type SWNE
 in a
$(w+2)\times(h+2)$ rectangle,
 whose
rising parts
intersect.
As argued in Section~\ref{self-intersecting} this number is
$P^*_{w+2,h+2}/2$, the other half being the flipped type SWEN.
By~\eqref{Pwh*}, this gives
\begin{equation}\label{second-binomial}
      \sum_{U\in \Paths_{wh}}
    \sum_{V\in \Paths_{wh}}
    \binom{\lvert U\cap V\rvert+1}2
       = (\s+1)\binom{\s+2}{w+1}\binom{\s}{w}/2.
\end{equation}
Since $\binom {k+1}2 = (k^2+k)/2$,
with $k=\lvert U\cap V\rvert$,
 the claimed formula
\eqref{second}
 for the second
moment
is obtained by multiplying
\eqref{second-binomial} by 2 and subtracting
\eqref{first}.
\end{proof}

\section{Rejection Sampling for Convex Polyominoes}
\label{sec:sampling}

Since the formula~\eqref{P} contains a subtraction,
it does not suggest a way to bijectively sample
 convex polyominoes with given height and width.
However, it is straightforward to randomly generate one of the
$  \tilde P_{wh}
  =
               \binom{2\s-4}{2w-2} + (w-1)\binom{2\s-5}{2w-2}$
S-walks. Note that they correspond to the positive terms in
formula~\eqref{P}. If the S-walk intersects itself, it is
rejected, and another random S-walk must be tried.
The \emph{efficiency} of the method is the success probability of generating
a valid polyomino.

For
the square case $w=h$
the probability of self-intersection goes to 0 like
$4\sqrt{2/\pi w}$, and hence the efficiency goes to 1.
In general, the efficiency is $1-O(1/\sqrt{\min\{w,h\}})$.
For small values of $h$ or $w$, the efficiency is around $1/2$.
The worst observed efficiency is around
$48\%$, for $h=w=4$.

\subsection{Sampling for given perimeter $2\s$}
\label{perimeter}
For sampling from the set of polyominoes with a given perimeter $2\s$,
without regarding the specific width and height, we can give a more explicit
sampling procedure with an efficiency that approaches 1 as $\s$
increases.
A very similar procedure is proposed
in
Hochstättler, Loebl and Moll~\cite{HLM}: its efficiency approaches
$1/2$ for large~$\s$. By adding a small twist to their procedure, we
can avoid throwing away $50\,\%$ of the samples.

The procedure is the same as above: We generate all S-walks.
The sum
\begin{equation*}
\tilde
  P_\s
=\sum_{w+h=\s} \tilde P_{wh}
  =
      4^{\s-2}/2+
      4^{\s-3}/2\cdot (2\s-5)/2
  =
      4^{\s-4}(2\s+3)
\end{equation*}
has a term
$4^{\s-2}/2$
for the directed polyominoes ($a=0$) and another term
$4^{\s-3}/2\cdot (2\s-5)/2$ for the remaining polyominoes ($a\ge 1$).

The S-walks for the directed polyominoes are obtained from
strings of
length $2\s-4$
 over the alphabet $\{\mathrm{V},\mathrm{H}\}$
 with an even number of H's. Their number is
 $(1/2)2^{2\s-4} = 4^{\s-2}/2$.

Let us now concentrate on the case $a\ge 1$.
We assume $\s\ge 4$.
We start by generating a random string of
length $2\s-6$
 over the alphabet $\{\mathrm{V},\mathrm{H}\}$
with an odd number of H's
($2^{2\s-7}$ possibilities).
We generate a random position where to insert an additional H ($2\s-5$
possibilities).
The result is a random string of length
$2\s-5$ with some even number
$\#\textrm{H}\ge 2$ of H's
and an odd number
$\#\textrm{V}\ge1$ of V's.
We choose $w$ and $h$ such that
$\#\textrm{H}=2w-2$ and
$\#\textrm{V}=2h-3$.

 Suppose that the additional, inserted H
is the $\bar a$-th H from the left.
This gives a random number $\bar a$
in the range $1\le \bar a \le 2w-2$.
We reduce this number modulo $w-1$ so that the resulting value $a$ lies
 in the interval $1\le a \le w-1$.
This gives all the data we need for an S-walk of height $h$ and width
$w$ with
 $1\le a \le w-1$.

We have started with $2^{2\s-7}(2\s-5)$
 possibilities, and we have used them to generate one of
$4^{\s-3}/2\cdot (2\s-5)/2
=2^{2\s-8}\cdot (2\s-5)$ random objects.
Thus we have wasted
 one bit. This happened at the reduction from $\bar a$ to $a$
modulo $w-1$.
If this loss is an issue, the lost bit can be recovered
 from the reduction (by comparing $\bar a$ against $w-1$)
and recycled for the next round of random generation.
 We leave it as a challenge to 
avoid the generation of the extra bit from the start.

The overall procedure,
 including the case $a=0$,
 is as follows.
We generate
a random string $x$ of
length $2\s-7$
 over the alphabet $\{\mathrm{V},\mathrm{H}\}$
and a random number $Q$ in the range $1\le Q\le 2\s+3$.
If $Q> 2\s-5$, we translate the eight possibilities into 3 bits and
use them to extend $x$ by 2 more symbols. One bit is wasted.
We add a parity symbol so that we get a string
of length
$2\s-4$ with an even number of H's and V's.
This is used as an S-walk with $a=0$.

If $Q\le 2\s-5$, we are in the case $a\ge 1$.
We extend $x$ by a parity symbol to make the number of H's odd, and we
use $Q$ as the insertion point for the additional $H$, as described
above.

The efficiency of the method is easy to analyze.
By comparing the negative term of formula~\eqref{Pm} against the
positive term, one can see
the probability of self-intersection is goes to 0 like
$4/\sqrt{\pi\s}$ as $\s\to\infty$.
(If we don't recycle the wasted bit, we additionally loose 1 bit
out of $2\s-7+\log_2(2\s+3)$ bits.)
 Hence the efficiency approaches~1 as $\s$ increases.

The main difference in the procedure of
Hochstättler, Loebl and Moll~\cite{HLM} is that they generate
the code string and the parameter $a\in\{0,1,2,\ldots,\s-1\}$
independently.
If the number of H's is bigger than $2a$, they certainly have to reject the
sample.
This causes them to loose 50\,\% of the samples right away.
Our improvement is essentially due to a combinatorial interpretation
of the equality in~\eqref{s-walks}, followed by a summation over all
pairs $w,h$ with $w+h=\s$.

\end{document}